\newcommand{\F}{\mathcal F}
\newcommand{\M}{\mathcal M}
\newcommand{\N}{\mathcal N}
\newcommand{\K}{\mathcal K}
\newcommand{\U}{\mathcal U}
\newcommand{\w}{\omega}
\newcommand{\IN}{\mathbb N}
\newcommand{\rank}{\mathsf{rank}}
\newcommand{\conv}{\mathrm{conv}}
\newcommand{\IR}{\mathbb R}
\newcommand{\id}{\mathrm{id}}
\newcommand{\dom}{\mathrm{dom}}
\newcommand{\pr}{\mathrm{pr}}
\newcommand{\St}{\mathrm{St}}
\newcommand{\Sd}{\mathrm{Sd}}
\newcommand{\overbar}[1]{\mkern 1.5mu\overline{\mkern-1.5mu#1\mkern-1.5mu}\mkern 1.5mu}
\newtheorem{lemma}{Lemma}
\newtheorem{theorem}{Theorem}
\newtheorem{proposition}{Proposition}
\newtheorem{claim}{Claim}
\newtheorem{corollary}{Corollary}
\newtheorem{problem}{Problem}
\title{Supercompact minus compact is super}
\author{Taras Banakh, Zdzis\l aw Koszto\l owicz, S\l awomir Turek}
\address{T.~Banakh: Ivan Franko National University of Lviv (Ukraine) and Jan Kochanowski University in Kielce (Poland)}
\email{t.o.banakh@gmail.com}
\address{Z.~Koszto\l owicz: Jan Kochanowski University in Kielce (Poland)}
\email{zkosztolowicz@gmail.com}
\address{S.~Turek: Cardinal Stefan Wyszy\'nski University in Warsaw (Poland)}
\email{s.turek@uksw.edu.pl}
\subjclass[2010]{Primary: 54D15; Secondary: 54D30 }
\keywords{$\aleph_0$-space, $\aleph$-space, supercompact space; simplicial complex}
\begin{document}
\begin{abstract} According to a folklore characterization of supercompact spaces, a compact Hausdorff space is supercompact if and only if it has a binary closed $k$-network. This characterization suggests to call a topological space {\em super} if it has a binary closed $k$-network $\mathcal N$. The binarity of $\mathcal N$ means that every linked subfamily of $\mathcal N$ is centered. Therefore, a Hausdorff space is supercompact if and only if it is super and compact. In this paper we prove that the class of super spaces contains all GO-spaces, all supercompact spaces, all metrizable spaces, and all collectionwise normal $\aleph$-spaces. Moreover, the class of super spaces is closed under taking Tychonoff products and discretely dense sets in Tychonoff products. The superness of metrizable spaces implies that each compact metrizable space is supercompact, which was first proved by Strok and Szyma\'nski (1975) and then reproved by Mills (1979), van Douwen (1981), and D\c ebski (1984).
\end{abstract} 
\maketitle

\section{Introduction}

By the classical Alexander Lemma \cite{Ax} (see also \cite[3.12.2]{Eng}), a topological space $X$ is compact if and only if it has a subbase $\mathcal B$ of the topology such that any cover $\mathcal U\subset \mathcal B$ of $X$ contains a finite subcover. However, for the closed interval $[0,1]$, the standard subbase $$\mathcal B=\{(a,1]:0<a<1\}\cup\{[0,b):0<b<1\}$$ of its topology has a much stronger property: each open cover $\U\subset\mathcal B$ of $[0,1]$ contains a two-element subcover $\{[0,b),(a,1]\}\subset\U$. 

This property of the closed interval motivated de Groot \cite{JdG} to introduce supercompact spaces as topological spaces $X$ possessing a subbase $\mathcal B$ of the topology such that every cover $\mathcal U\subset\mathcal B$ of $X$ contains a two-element subcover $\{U_1,U_2\}\subset\U$. The class of supercompact spaces includes all linearly ordered compact space and is closed under Tychonoff products. By a result of Strok and Szyma\'nski \cite{StSz} (reproved in \cite{Mills}, \cite{Douwen}, \cite{Dem}), every compact metrizable space is supercompact. On the other hand, the Stone-\v Cech compactification $\beta\omega$ of the countable discrete space $\omega$ is not supercompact by a known result of Bell \cite{Bell78}. More information on supercompact spaces can be found in the monograph of van Mill \cite{vM}.

There exists a convenient characterization of the supercompactness in terms of binary $k$-networks.  Let us recall that a family $\mathcal N$ of subsets of a topological space $X$ is called 
\begin{itemize}
\item a {\em $k$-network} if for any open set $U\subset X$ and a compact set $K\subset U$ there exists a finite subfamily $\mathcal F\subset\mathcal N$ such that $K\subset\bigcup\mathcal F\subset U$;
\item a {\em closed $k$-network} if $\mathcal N$ is a $k$-network and each set $N\in\mathcal N$ is closed in $X$;
\item {\em linked} if for any sets $A,B\in\mathcal N$ the intersection $A\cap B$ is not empty;
\item {\em centered} if for any finite nonempty subfamily $\mathcal F\subset\mathcal N$ the intersection $\bigcap\mathcal F$ is not empty;
\item {\em binary} if any linked subfamily $\mathcal F\subset\mathcal N$ is centered.
\end{itemize}

For the first time, the following (folklore) characterization of supercompactness was explicitly proved in \cite[2.2]{BKT}.

\begin{theorem}\label{t:super} A compact Hausdorff space $X$ is supercompact if and only if it possesses a binary closed $k$-network.
\end{theorem}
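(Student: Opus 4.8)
The plan is to prove both implications through the classical complementation between a supercompact subbase and a binary closed \emph{subbase}, and then to bridge the gap between a closed subbase and a closed $k$-network; the whole argument runs on complements plus the compactness of $X$.

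For the ``if'' part, assume $\N$ is a binary closed $k$-network and set $\cB=\{X\setminus N:N\in\N\}$. First I would check that $\cB$ is a subbase of the topology: given $x$ in an open set $U$, the set $X\setminus U$ is compact and contained in the open set $X\setminus\{x\}$, so the $k$-network property yields a finite $\F\subset\N$ with $X\setminus U\subset\bigcup\F\subset X\setminus\{x\}$; then $\bigcap_{N\in\F}(X\setminus N)=X\setminus\bigcup\F$ is a basic open set with $x\in\bigcap_{N\in\F}(X\setminus N)\subset U$. Next, the two-element subcover property: if $\{X\setminus N:N\in\mathcal M\}$ covers $X$ for some $\mathcal M\subset\N$, then $\bigcap\mathcal M=\emptyset$; hence $\mathcal M$ cannot be linked, because binarity together with the compactness of $X$ (and the closedness of the members of $\N$) would force $\bigcap\mathcal M\neq\emptyset$. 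So some $N_1,N_2\in\mathcal M$ are disjoint, and $\{X\setminus N_1,X\setminus N_2\}$ is the required two-element subcover. Thus $X$ is supercompact.

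For the ``only if'' part, let $\cB$ be a subbase of $X$ in which every cover has a two-element subcover, and put $\mathcal S=\{X\setminus B:B\in\cB\}$. By complementation the two-element subcover property turns into the following strong form of binarity $(P)$: \emph{every linked subfamily of $\mathcal S$ has nonempty total intersection} (if a linked subfamily had empty intersection, the complements would cover $X$, and a two-element subcover would produce two disjoint members, contradicting linkedness). I then take $\N$ to be the family of all finite intersections of members of $\mathcal S$. Binarity of $\N$ follows from $(P)$: if $\F\subset\N$ is linked, the family $\mathcal G$ of all the $\mathcal S$-factors occurring in the members of $\F$ is again linked, since for factors $A\supset F$ and $B\supset F'$ with $F,F'\in\F$ one has $A\cap B\supset F\cap F'\neq\emptyset$; then $(P)$ gives $\bigcap\mathcal G\neq\emptyset$, and $\bigcap\mathcal G=\bigcap\F$. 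The point of using intersections rather than unions is exactly that this computation works for intersections but fails for unions.

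It remains to show that $\N$ is a $k$-network, and this is the step I expect to be the main obstacle. Given a compact $K$ inside an open $U$, put $L=X\setminus U$. For each $y\in L$, using regularity and the fact that finite intersections of $\cB$-members form a base, I choose a basic open $H_y\ni y$ with $\overline{H_y}\cap K=\emptyset$; writing $H_y=X\setminus(T_{y,1}\cup\dots\cup T_{y,n_y})$ with $T_{y,j}\in\mathcal S$, the condition $\overline{H_y}\cap K=\emptyset$ gives $K\subset X\setminus\overline{H_y}\subset\bigcup_j T_{y,j}$. By compactness of $L$ I extract a finite subcover $H_{y_1},\dots,H_{y_s}$ of $L$, so that $K\subset\bigcap_s\bigcup_j T_{y_s,j}\subset X\setminus L=U$. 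The crux is then the distributive identity $\bigcap_s\bigcup_j T_{y_s,j}=\bigcup_\sigma\bigcap_s T_{y_s,\sigma(s)}$, taken over all choice functions $\sigma$: it rewrites the intersection of finite unions as a finite union of members $\bigcap_s T_{y_s,\sigma(s)}\in\N$, which is precisely the finite subfamily of $\N$ whose union lies between $K$ and $U$. The delicate point worth stressing is that binarity is preserved by finite intersections but destroyed by finite unions, while the closed neighbourhoods naturally furnished by a subbase are finite \emph{unions} of subbasic closed sets; the distributive identity reconciles this, letting $\N$ consist solely of (binarity-preserving) finite intersections, since the outer finite union is already permitted by the very definition of a $k$-network.
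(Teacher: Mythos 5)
Your proof is correct. The paper itself contains no proof of this theorem---it is stated as folklore and referred to \cite[2.2]{BKT}---and your argument (complementing the binary closed $k$-network to get a supercompact subbase in one direction; complementing the supercompact subbase, passing to finite intersections to preserve binarity, and using the distributive law to convert intersections of finite unions into the finite unions of members of $\mathcal N$ required by the $k$-network property in the other) is exactly the standard complementation proof that the cited result uses, so there is nothing to flag.
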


This theorem shows that in the class of compact Hausdorff spaces the supercompactness is equivalent to the existence of a binary closed $k$-network.  So, the latter property can be called the superness. More precisely, we define a topological space $X$ to be {\em super} if it possess a binary closed $k$-network. In this terminology,  Theorem~\ref{t:super} says that a Hausdorff topological space is supercompact if and  only if it is super and compact.

Looking at this characterization it is natural to ask which topological spaces (besides supercompact Hausdorff spaces) are super.

\begin{theorem}\label{t:super} The class of super topological spaces contains all supercompact spaces, all generalized ordered spaces, all metrizable spaces and all collectionwise normal $\aleph$-spaces.
\end{theorem}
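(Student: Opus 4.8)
The four inclusions are of rather different character, so I would treat them separately and arrange the argument so that the genuinely new work is concentrated in the last, most general, case. The inclusion of supercompact spaces is immediate: a supercompact Hausdorff space is compact, so by the folklore characterization recalled above it already carries a binary closed $k$-network and is therefore super. For generalized ordered spaces I would exhibit an explicit binary closed $k$-network. For metrizable spaces I would first observe that they constitute a subclass of the collectionwise normal $\aleph$-spaces (being paracompact, hence collectionwise normal, and carrying a $\sigma$-locally finite base, hence a $\sigma$-locally finite $k$-network), so that the metrizable case is a special — but technically transparent — instance of the final case. Accordingly the heart of the proof is a simplicial-complex construction producing a binary closed $k$-network on an arbitrary collectionwise normal $\aleph$-space.

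For a GO-space $X$ I would take $\N$ to be the family of all closed order-convex subsets of $X$. To see that $\N$ is a $k$-network, fix a compact set $K$ and an open set $U\supset K$. Using that $X$ is regular and has a base of convex open sets, for each $x\in K$ I would produce a closed convex set $F_x$ with $x\in\mathrm{int}(F_x)\subset F_x\subset U$: if $x$ is neither the leftmost nor the rightmost point of a convex open neighborhood $O\subset U$, then $[c,d]$ for suitable $c<x<d$ in $O$ works, and the one-sided and isolated cases are handled by the corresponding closed rays and by $\{x\}$. Compactness then yields finitely many such $F_x$ whose union covers $K$ and lies in $U$. Binarity of $\N$ is precisely the Helly property of convex sets in a linearly ordered set: any finite pairwise-intersecting family of order-convex sets has a common point (the three-set case follows by inserting the median witness point, the general case by the usual Helly induction). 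Since the centered condition involves only finite subfamilies, this shows that every linked subfamily of $\N$ is centered.

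The remaining case is the substantive one, and the appearance of simplicial complexes is the clue to its mechanism. Starting from a $\sigma$-locally finite closed $k$-network of a collectionwise normal $\aleph$-space $X$, I would build a refining sequence of locally finite open covers $\U_n$, using collectionwise normality to shrink and separate them and to produce subordinate partitions of unity $(\phi^n_v)$, hence canonical maps $f_n\colon X\to|N_n|$ into the geometric realizations of the nerves $N_n$ (in the metrizable case the metric supplies all of this directly). The binary closed $k$-network would then be assembled from the pull-backs along $f_n$ of the closed stars of the \emph{barycenters} of the simplices of $N_n$ in the barycentric subdivision $\Sd N_n$; concretely these pull back to closures of the domination sets $S_\sigma=\{x:\min_{v\in\sigma}\phi^n_v(x)>\max_{w\notin\sigma}\phi^n_w(x)\}$. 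The point of passing to $\Sd N_n$ rather than using the vertex stars of $N_n$ directly is combinatorial: $\Sd N_n$ is the order complex of the face poset of $N_n$, so two such stars meet exactly when the corresponding simplices are comparable, and a pairwise-comparable (linked) family is automatically a chain — a clique in the flag complex $\Sd N_n$ — and hence spans a simplex. Fineness of $\U_n$ together with local finiteness makes the sets $S_\sigma$ small, so that any compact set inside an open set is covered by finitely many of them lying inside that open set; closedness is built in.

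The crux is binarity, and it is where I expect the real work. The combinatorics above shows that a linked subfamily $\{\,\overline{S_{\sigma_i}}\,\}$ forces the simplices $\sigma_i$ to be pairwise comparable, hence to form a chain $\sigma_{i_1}\subset\cdots\subset\sigma_{i_k}$; the remaining task is to produce a single point of $X$ lying in all of the $\overline{S_{\sigma_i}}$, that is, to \emph{realize} the barycenter of this chain inside $X$. This realization is not automatic — it demands a point whose partition-of-unity coordinates strictly decrease along the levels of the chain — and securing it is exactly where the shrinkability and separation afforded by collectionwise normality, together with a careful normalization of the maps $f_n$, must be used (in the metrizable case one tunes the partitions of unity using the metric). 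The second, parallel obstacle is to keep the \emph{union} over all $n$ binary: a linked subfamily may mix sets coming from different covers $\U_n$, and one must arrange enough coherence between $\Sd N_n$ and $\Sd N_{n+1}$ — this is the role of the rank bookkeeping — so that cross-level linked sets become comparable after passing to a common refinement and their finite intersections are again witnessed by a point of $X$. Once both points are in hand, the metrizable case, and with it the Strok--Szyma\'nski theorem that every compact metrizable space is supercompact, emerges as the special case in which all covers, partitions of unity, and refinements are supplied by the metric.
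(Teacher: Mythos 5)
Your treatment of three of the four classes matches the paper's. Supercompact spaces are super by the folklore characterization (Theorem~2.2 of \cite{BKT}); metrizable spaces are handled by observing they are collectionwise normal $\aleph$-spaces (the paper invokes the Nagata--Smirnov theorem to get a $\sigma$-\emph{discrete} base, which matters because the paper defines $\aleph$-spaces via $\sigma$-discrete $k$-networks and its machinery genuinely uses discreteness, not just local finiteness); and for GO-spaces the paper uses exactly your family of all closed order-convex sets, proving binarity by the same Helly-type argument with witness points and the $k$-network property from regularity together with closedness of the order, which makes closures of convex open sets convex. Up to these cosmetic differences, this part of your proposal is correct and coincides with the paper.

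The gap is in the main case, collectionwise normal $\aleph$-spaces, and it sits precisely at the two points you explicitly defer. You correctly identify the combinatorial core --- closed stars of barycenters in the second barycentric subdivision meet only for comparable simplices, linked families of such stars form chains, and chains have common points (this is the content of Lemmas~\ref{l3}--\ref{l6} and Proposition~\ref{subdiv}) --- but the ``realization'' and ``cross-level'' problems are the actual theorem, and the mechanisms you gesture at would not deliver them. Canonical nerve maps $f_n\colon X\to \overbar{N_n}$ built from partitions of unity do not in general send members of the $k$-network onto subcomplexes, and they certainly do not satisfy $f(\bigcap\mathcal E)=\bigcap_{E\in\mathcal E}f(E)$; without that exactness, a common point of the pulled-back stars in the nerve need not lie in the image of the corresponding intersection in $X$, and no tuning of the partition of unity is known to repair this. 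The paper's substitute is Proposition~\ref{real}: for any $n$-discrete family $\F$ of closed sets in a collectionwise normal space there is a continuous surjection $f\colon X\to\overbar{K}$ with $\dim(K)\le n$ sending each member of $\F$ onto a subcomplex and preserving \emph{all} finite intersections exactly; it is proved by a rank induction using collectionwise normality and a sweeping-out retraction (Lemma~\ref{reduce}), and this exactness is what makes point realization automatic in the Key Lemma (Lemma~\ref{key}). Likewise, binarity of the union over all levels is not obtained by passing to common refinements so that sets from different levels become comparable; instead, the induction in Section~\ref{s:main} applies Lemma~\ref{key} at stage $n$ with $C=N$ (for each $N\in\N_n$) and $\F=\bigcup_{k<n}\F_k$, so that nonemptiness of intersections of mixed linked families is built into the construction of $\F_N$, and the disjointness of the discrete family $\N_n$ localizes any finite linked family to a single $\F_N$ together with older levels. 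Since your outline contains neither an analogue of Proposition~\ref{real} nor this inductive scheme, it does not yet prove superness of collectionwise normal $\aleph$-spaces --- and hence, on your own reduction, not of metrizable spaces either.
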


Let us recall that a regular topological space $X$ is called 
\begin{itemize}
\item {\em generalized ordered} (briefly, a {\em GO-space}) if $X$ has a base of the topology consisting of order-convex sets with respect to some linear order on $X$;
\item an {\em $\aleph$-space} if $X$ has a $\sigma$-discrete $k$-network;
\item an {\em $\aleph_0$-space} if $X$ possesses a countable $k$-network.
\end{itemize} 
Because of the regularity, in above definitions we can assume that the $k$-networks are closed.
By the Nagata-Smirnov Metrization Theorem \cite[4.4.7]{Eng}, a regular topological space is metrizable if and only if it has a $\sigma$-discrete base. This characterization implies that each (separable) metrizable space is an $\aleph$-space (and an $\aleph_0$-space).

$\aleph_0$-spaces were introduced by Michael \cite{Mich} who proved that the class of $\aleph_0$-spaces is closed under taking the function spaces $C_k(X,Y)$ endowed with the compact-open topology. $\aleph$-spaces were introduced by O'Meara \cite{OM} as non-Lindel\"of generalizations of $\aleph_0$-spaces. $\aleph_0$-spaces and $\aleph$-spaces form two important classes of generalized metric spaces that have many applications in Topology, Topological Algebra and Functional Analysis, see \cite[\S11]{Grue}, \cite{Fog}, \cite{Tan94}, \cite{Tan01}, \cite{Ban15}, \cite{GKKW}.

Theorem~\ref{t:super} will be proved in Section~\ref{s:super}. The most difficult part is to prove that collectionwise normal $\aleph$-spaces are super. This is done in the following theorem that will be proved in Section~\ref{s:main}.

\begin{theorem}\label{t:main} Each collectionwise normal $\aleph$-space possesses a binary $\sigma$-discrete closed $k$-network and hence is super.
\end{theorem}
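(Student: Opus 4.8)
The plan is to start from a $\sigma$-discrete closed $k$-network $\N=\bigcup_{n\in\IN}\N_n$, which exists because $X$ is an $\aleph$-space and may be taken closed by regularity, and to replace it by a binary one of the same type. The mechanism for producing binarity will be simplicial: I will attach to the network a system of abstract simplicial complexes, pass to their barycentric subdivisions, and take closed stars of vertices as the members of the new network. The reason for subdividing is the classical fact that $\Sd K$ is a \emph{flag} (clique) complex for every simplicial complex $K$: a finite set of vertices of $\Sd K$ spans a simplex if and only if any two of them do. Consequently, in the realization $|\Sd K|$ the closed stars $\overline{\St}(v)$ of the vertices $v$ form a binary family, since $\{\overline{\St}(v_i)\}_i$ is linked exactly when the $v_i$ are pairwise joined by edges, which by the flag property means that they span a simplex; then $\bigcap_i\overline{\St}(v_i)$ contains the nonempty closed face on $\{v_i\}$, and the same applies to every subfamily, so linked implies centered.

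Next I would use collectionwise normality to geometrize the levels of the network. From the discrete family of closed sets $\N_n$, collectionwise normality yields discrete open expansions, and, organizing the levels, locally finite open covers $\U_n$ of $X$ dictated by $\N$ together with subordinated partitions of unity. Each partition of unity defines a continuous canonical map $f_n\colon X\to|K_n|$ into the geometric realization of the nerve $K_n$ of $\U_n$; composing with barycentric subdivision gives a map into the flag complex $|\Sd K_n|$. Pulling back the closed stars of the vertices of $\Sd K_n$ along $f_n$ produces families $\N_n'$ of closed subsets of $X$, and I claim $\N'=\bigcup_n\N_n'$ is the desired $k$-network: local finiteness of $\U_n$ together with the discreteness of $\N_n$ keep each $\N_n'$ discrete, so $\N'$ is $\sigma$-discrete, while the fineness of the covers $\U_n$, inherited from the $k$-network $\N$, lets one separate any compact set from the complement of any of its neighbourhoods and thus recover the $k$-network property for $\N'$.

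The essential difficulty is \emph{global} binarity. The flag-complex argument gives binarity only inside a single level: within each $|\Sd K_n|$ linked closed stars are centered, but two stars coming from different levels $n\ne m$ may be linked without being centered, so the naive union $\bigcup_n\N_n'$ need not be binary. Overcoming this is the heart of the proof, and I expect it to require organizing the complexes into one coherent structure rather than treating them independently — for instance building a single inverse sequence $\cdots\to|\Sd K_{n+1}|\to|\Sd K_n|\to\cdots$ and taking the network to consist of stars pulled back along the diagonal map $f=(f_n)\colon X\to\prod_n|\Sd K_n|$, exploiting that a product subbase of binary families is binary. Even then binarity does not pass freely to preimages: if $\{f^{-1}(\overline{\St}(v_i))\}_i$ is linked, the simplex $\bigcap_i\overline{\St}(v_i)$ is nonempty, but centeredness in $X$ demands that $f(X)$ actually meet this simplex. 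The construction must therefore be arranged so that every clique of stars is \emph{realized by a point of $X$}, i.e.\ the canonical maps must send points of $X$ onto the relevant faces; securing this compatibility simultaneously with the $\sigma$-discreteness and the $k$-network properties is, I expect, the main obstacle.
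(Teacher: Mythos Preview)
Your outline correctly isolates the two genuine issues --- cross-level binarity and the fact that binarity of a family in the target need not pull back to $X$ --- but it stops short of resolving either, so as it stands it is a diagnosis rather than a proof. The inverse-limit suggestion does not help: linked preimages in $X$ give linked images only on a finite set of coordinates, and you still need a point of $X$ mapping into the common face, which is exactly the realizability problem you flag at the end.

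The paper's argument differs from your plan in two essential ways. First, it is \emph{inductive across levels rather than parallel}: one does not build all $\N_n'$ independently and then try to reconcile them. Instead, having already built the families $\F_0,\dots,\F_{n-1}$ (whose union is $m$-discrete for some finite $m$), one refines each single set $N\in\N_n$ into an $(m{+}2)$-discrete cover $\F_N$ with the property that every linked $\mathcal L\subset\F_N\cup\bigcup_{k<n}\F_k$ not contained in the earlier levels already meets in a point of $N$. Global binarity then follows by looking at the largest level represented in a linked family. Second --- and this is the device you are missing --- the refinement $\F_N$ is produced not from a nerve map but from a map $f\colon X\to\overbar K$ onto a finite-dimensional complex with the much stronger property that each member of $\{N\}\cup\bigcup_{k<n}\F_k$ is sent to a \emph{subcomplex} of $\overbar K$ and $f(\bigcap\mathcal E)=\bigcap_{E\in\mathcal E}f(E)$ for every subfamily $\mathcal E$. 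This intersection-preserving simplicial realization (built by a sweeping-out induction over the minimal-intersection stratification of $\bigwedge\F$, using collectionwise normality at each step) is what makes the pull-back of stars work: if a linked family contains some star $\St^2(b_\sigma,f(N))$ and some earlier $F$'s, the star forces $\sigma$ to lie in each subcomplex $f(F)$, hence a common barycentric point lies in $f(N\cap\bigcap F)$, and intersection-preservation lets you lift it to $X$.

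Two further points your sketch gets slightly wrong. The paper uses closed stars in the \emph{second} barycentric subdivision, not the first; this is what guarantees that a star meeting a subcomplex $\overbar\sigma$ is centered at a face of $\sigma$ (your flag argument for $\Sd K$ gives binarity of stars among themselves, but not this compatibility with subcomplexes). And the families produced are not discrete but $(n{+}2)$-discrete at each stage, with the discreteness index doubling; this is harmless for $\sigma$-discreteness but means your expectation that ``discreteness of $\N_n$ keeps $\N_n'$ discrete'' is too optimistic.
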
  

Since each $\sigma$-discrete family in a Lindel\"of space is at most countable, Theorem~\ref{t:main} implies

\begin{corollary}\label{c:a0} Each $\aleph_0$-space possesses a binary countable closed $k$-network and hence is super.
\end{corollary}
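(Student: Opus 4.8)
The plan is to derive the corollary from Theorem~\ref{t:main} by checking that every $\aleph_0$-space satisfies its hypotheses and then converting the resulting $\sigma$-discrete network into a countable one. First I would note that an $\aleph_0$-space $X$ is an $\aleph$-space: if $\{N_n:n\in\w\}$ is a countable $k$-network, then $\bigcup_{n\in\w}\{N_n\}$ exhibits it as a countable union of (trivially discrete) singleton families, so it is $\sigma$-discrete. Here I would also record that, since $X$ is regular, the $k$-network may be assumed closed, as noted after the definitions.

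The remaining hypothesis of Theorem~\ref{t:main} is collectionwise normality. To obtain it I would use the standard fact, due to Michael, that every $\aleph_0$-space is Lindel\"of (a regular space with a countable $k$-network has a countable network, whence Lindel\"ofness follows by selecting, for a given open cover, one network member inside a cover element through each point). A regular Lindel\"of space is paracompact, and hence collectionwise normal, so $X$ is a collectionwise normal $\aleph$-space. Theorem~\ref{t:main} then yields a binary $\sigma$-discrete closed $k$-network $\mathcal N$ on $X$.

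It remains to upgrade ``$\sigma$-discrete'' to ``countable''. The elementary fact behind this is that in a Lindel\"of space every discrete family of nonempty sets is at most countable: for each point choose a neighborhood meeting at most one member of the family, extract a countable subcover, and observe that each nonempty member meets one of the chosen neighborhoods. Writing $\mathcal N=\bigcup_{n\in\w}\mathcal N_n$ with each $\mathcal N_n$ discrete and applying this to each $\mathcal N_n$ shows $\mathcal N$ is countable. Thus $\mathcal N$ is a binary countable closed $k$-network and $X$ is super. I expect the only genuinely nontrivial point to be the verification of collectionwise normality, which leans on the standard but nonelementary facts that $\aleph_0$-spaces are Lindel\"of and that regular Lindel\"of spaces are paracompact; the rest is routine bookkeeping.
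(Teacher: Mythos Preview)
Your proof is correct and follows exactly the route the paper takes: apply Theorem~\ref{t:main} after verifying that an $\aleph_0$-space is a collectionwise normal $\aleph$-space (via Lindel\"ofness and paracompactness), and then use Lindel\"ofness again to reduce the $\sigma$-discrete $k$-network to a countable one. You have simply spelled out the details the paper leaves implicit in its one-line justification preceding the corollary.
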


Since compact metrizable spaces are $\aleph_0$-spaces, Corollary~\ref{c:a0} and Theorem~\ref{t:super} imply the Strok--Szyma\'nski theorem \cite{StSz}.

\begin{corollary}[Strok--Szyma\'nski] Each compact metrizable space is supercompact.
\end{corollary}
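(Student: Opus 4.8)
The plan is to deduce the corollary purely by chaining together the results already assembled in the introduction, since no genuinely new topology is required. Let $X$ be a compact metrizable space. First I would verify the single elementary input that is not literally a cited theorem: that $X$ is an $\aleph_0$-space. Being compact and metrizable, $X$ is second countable, so it admits a countable base $\mathcal B$. This $\mathcal B$ is in fact a countable $k$-network: given an open $U\subseteq X$ and a compact $K\subseteq U$, for each $x\in K$ choose a basic set $B_x\in\mathcal B$ with $x\in B_x\subseteq U$ (using that $\mathcal B$ is a base), and extract a finite subcover $B_{x_1},\dots,B_{x_n}$ of $K$ by compactness; then $K\subseteq\bigcup_{i\le n}B_{x_i}\subseteq U$. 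Hence $\mathcal B$ witnesses that $X$ is an $\aleph_0$-space. This is exactly the remark recorded before the statement of Theorem~\ref{t:main}, so in the write-up it can simply be cited rather than reproved.

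Next I would invoke Corollary~\ref{c:a0}: since $X$ is an $\aleph_0$-space, it possesses a binary countable closed $k$-network and is therefore super. In particular $X$ has a binary closed $k$-network.

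Finally I would apply the folklore characterization of supercompactness (Theorem~\ref{t:super}): $X$ is a compact Hausdorff space possessing a binary closed $k$-network, hence $X$ is supercompact. This completes the deduction.

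There is essentially no obstacle to clear here, because all the substantive work has been front-loaded into Theorem~\ref{t:main} and its Corollary~\ref{c:a0}; the argument for the present corollary is a formal implication once those are in hand. The only point demanding any care is the verification that a compact metrizable space really is an $\aleph_0$-space, and even that reduces to the standard facts that such a space is second countable and that a countable base of a regular space serves as a countable (closed) $k$-network. Thus the proof is short, and its role is mainly to exhibit the classical Strok--Szyma\'nski theorem as an immediate special case of the general framework developed in the paper.
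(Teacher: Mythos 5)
Your proposal is correct and follows exactly the paper's route: compact metrizable $\Rightarrow$ $\aleph_0$-space $\Rightarrow$ (Corollary~\ref{c:a0}) super $\Rightarrow$ (folklore characterization, Theorem~\ref{t:super}) supercompact, since the space is also compact Hausdorff. The only addition is that you spell out the standard verification that a second countable space's base is a countable $k$-network, which the paper leaves as a remark.
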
 

Next, we discuss the stability of the class of super spaces under some operations. The results of Bell \cite{Bell78}, \cite{Bell90} witness that the class of super(compact) Hausdorff spaces is not stable under taking closed subspaces or continuous images. Nonetheless, it is stable under taking Tychonoff products and discretely dense sets in Tychonoff products. A subset $X$ of the Tychonoff product $\prod_{\alpha\in A}X_\alpha$ of topological spaces is called {\em discretely dense} if for any element $x\in \prod_{\alpha\in A}X_\alpha$ and any finite set $F\subset A$ there exists a function $y\in X\subset \prod_{\alpha\in F}X_\alpha$ such that $y{\restriction}_F=x{\restriction}_F$. Observe that $X$ is discretely dense in $\prod_{\alpha\in A}X_\alpha$ if and only if $X$ is dense in the Tychonoff product topology of the spaces $X_\alpha$ endowed with the discrete topologies.

\begin{theorem}\label{t:product} Let $X$ be a discretely dense subspace of the Tychonoff product $\prod_{\alpha\in A}X_\alpha$ of topological spaces. If all spaces $X_\alpha$, $\alpha\in A$, are super, then so is the space $X$.
\end{theorem}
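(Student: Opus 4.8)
The plan is to build the required network on $X$ from the binary closed $k$-networks of the factors by taking restricted boxes, and to isolate discrete density as the single ingredient responsible for binarity. For each $\alpha\in A$ fix a binary closed $k$-network $\N_\alpha$ of $X_\alpha$ consisting of nonempty sets, and consider the family of closed boxes
\[
\M=\Big\{\,\bigcap_{\alpha\in F}\pr_\alpha^{-1}(N_\alpha)\cap X:\ F\subset A\text{ finite},\ N_\alpha\in\N_\alpha\ (\alpha\in F)\,\Big\}.
\]
Each member of $\M$ is closed in $X$, being the trace on $X$ of a finite intersection of preimages of closed sets under the continuous projections, so it remains to check that $\M$ is a binary $k$-network.

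First I would verify the $k$-network property, and here discrete density is not needed. I claim that the family $\M_0=\{\bigcap_{\alpha\in F}\pr_\alpha^{-1}(N_\alpha):F\subset A\text{ finite},\ N_\alpha\in\N_\alpha\}$ is a closed $k$-network of the whole product $\prod_{\alpha\in A}X_\alpha$; restricting to $X$ then gives that $\M$ is a $k$-network of $X$, since compact subsets and open subsets of $X$ are traces of compact and open subsets of the product. To prove the claim, given a compact set $K$ inside an open set $U\subset\prod_{\alpha\in A}X_\alpha$, I would first cover $K$ by finitely many basic open boxes contained in $U$; these involve only a finite set $E\subset A$ of coordinates, so that $K\subset\pr_E^{-1}(W')\subset U$ for some open $W'\subset\prod_{\alpha\in E}X_\alpha$ containing the compact set $\pr_E(K)$. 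On the finite product $\prod_{\alpha\in E}X_\alpha$ the boxes $\prod_{\alpha\in E}N_\alpha$ with $N_\alpha\in\N_\alpha$ form a closed $k$-network, by the standard fact that a finite product of spaces with closed $k$-networks carries the product network as a $k$-network (an iterated tube-lemma argument). Hence $\pr_E(K)$ is covered by finitely many such boxes $D_1,\dots,D_k$ with $\bigcup_iD_i\subset W'$, and pulling back yields $K\subset\bigcup_i\pr_E^{-1}(D_i)\subset\pr_E^{-1}(W')\subset U$ with each $\pr_E^{-1}(D_i)\in\M_0$.

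The heart of the argument is binarity, and this is exactly where discrete density enters. The key observation is a coordinatewise criterion for nonempty intersection: for finitely many nonempty boxes $M_i=\bigcap_{\alpha\in F_i}\pr_\alpha^{-1}(N_\alpha^i)\cap X$ $(i=1,\dots,k)$, one has $\bigcap_iM_i\neq\emptyset$ if and only if $\bigcap_{i:\,\alpha\in F_i}N_\alpha^i\neq\emptyset$ for every coordinate $\alpha$. The forward implication is immediate by projecting; the reverse is precisely where I would use discrete density: choosing a point of $\bigcap_{i:\,\alpha\in F_i}N_\alpha^i$ in each relevant coordinate $\alpha\in\bigcup_iF_i$ defines an element of the product, and discrete density applied to the finite set $\bigcup_iF_i$ produces a point of $X$ realizing these finitely many coordinates, which then lies in $\bigcap_iM_i$. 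Now let $\mathcal L\subset\M$ be linked and let $\{M_1,\dots,M_k\}\subset\mathcal L$ be a finite subfamily. Applying the criterion to pairs, linkedness of $\mathcal L$ forces, for each fixed coordinate $\alpha$, the family $\{N_\alpha^i:\alpha\in F_i\}$ to be linked in $\N_\alpha$; since $\N_\alpha$ is binary this finite linked family is centered and hence has nonempty intersection. Thus the coordinatewise criterion is satisfied, so $\bigcap_iM_i\neq\emptyset$, proving that $\mathcal L$ is centered and that $\M$ is binary.

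The main obstacle is the verification of binarity, and more precisely the reverse direction of the coordinatewise intersection criterion, because this is the only point at which the hypothesis that $X$ is \emph{discretely} dense (rather than merely dense) is used; the $k$-network property, by contrast, holds for the trace of $\M_0$ on an arbitrary subspace of the product. I would therefore expect the write-up to spend most of its effort on the gluing step and on the clean bookkeeping of box representations, while presenting the $k$-network reduction to finitely many coordinates as routine.
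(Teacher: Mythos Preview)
Your proposal is correct and follows essentially the same approach as the paper: both construct the family of restricted finite boxes from the factor $k$-networks, verify the $k$-network property by reducing to a finite subproduct, and prove binarity by projecting the linked condition to each coordinate, invoking the binarity of $\N_\alpha$, and then using discrete density to assemble a common point in $X$. The only cosmetic difference is that the paper adds $X_\alpha$ to each $\N_\alpha$ in order to normalize all boxes in a finite linked family to a common finite index set $F$, whereas you work with possibly different $F_i$'s and phrase the gluing step as a coordinatewise intersection criterion; these are equivalent bookkeeping choices.
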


Theorem~\ref{t:product} will be proved in Section~\ref{t:product}. In Sections~\ref{s:complex}--\ref{s:key} we prove some lemmas that will be used in the proof of Theorem~\ref{t:main}, which is the most difficult result of this paper. In the proof of this theorem we use the geometric approach of D\c ebski  \cite{Dem} and some ideas from the unpublished survey \cite{Kub}.

Theorem~\ref{t:main} implies that each metrizable space has a binary $\sigma$-discrete closed $k$-network. This motivates the following open problem:

\begin{problem} Has each metrizable space a binary $\sigma$-discrete base of the topology?
\end{problem}

\section{Simplicial complexes and their barycentric subdivisions}\label{s:complex}

In this section we recall the necessary information on simplicial complexes. By an {\em abstract simplicial complex} we understand a pair $K=(V,E)$ consisting of a set $V$ called the {\em set of vertices} of $K$ and a family $E$ of nonempty finite subsets of $V$ such that together with any set $\sigma\in E$ the family $E$ contains all nonempty subsets of $\sigma$. The finite sets $\sigma\in E$ are called the {\em simplexes} of $K$.

Each vertex $v\in V$ will be identified with the characteristic function $\delta_v\colon V\to \{0,1\}$ of the singleton $\{v\}$ in $V$ (so, $\delta_v^{-1}(1)=\{v\}$). The function $\delta_v$ is an element of the Banach space $\ell^1(V)$ of all functions $f\colon V\to\IR$ such that $\sum_{v\in V}|f(v)|<\infty$. 
Each simplicial complex $K=(V,E)$ will be identified with its {\em geometric realization}
$$\overbar K=\bigcup_{\sigma\in E}\conv(\sigma^{(0)})\subset \ell^1(V)$$
where $\sigma^{(0)}=\{\delta_v\colon v\in\sigma\}$. The geometric realization $\overbar K$ of $K$ is endowed with the strongest topology which coincides with the topology inherited from $\ell^1(V)$ on each geometric simplex $\overbar \sigma=\conv(\sigma^{(0)})$, $\sigma\in E$.
For a simplex $\sigma\in E$ let $\partial \sigma=\bigcup\{\overbar{\tau}\colon\tau\subsetneq\sigma\}$ be the {\em combinatorial boundary} of $\sigma$ and $\sigma^\circ=\overbar{\sigma}\setminus\partial\sigma$ be the {\em combinatorial interior} of $\sigma$. 

In the sequel, geometric realizations of abstract simplicial complexes will be called {\em simplicial complexes}.

For an abstract simplicial complex $K$, its {\em dimension} $\dim(K)$ is defined as $\sup\{|\sigma|-1\colon \sigma\in E\}$. It is well-known that the dimension of $K$ equals the topological dimension of the geometric realization $\overbar{K}$ of $K$.

Let $K=(V,E)$ be an abstract simplicial complex. A simplicial complex $K'=(V',E')$ is called a {\em subcomplex} of $K$ if $V'\subset V$ and $E'\subset E$.

For two abstract simplicial complexes $K_1=(V_1,E_1)$ and $K_2=(V_2,E_2)$ with disjoint sets of vertices their {\em joint} $K_1*K_2$ is the simplicial complex $(V_1\cup V_2, E)$ endowed with the set of simplexes $E=\{\sigma:\exists \sigma_1\in E_1,\;\exists \sigma_2\in E_2$ with $\sigma\subset\sigma_1\cup\sigma_2\}$.
The {\em cone} over a simplicial complex $K$ with vertex $v$ in the join $K*\{v\}$ of $K$ and the singleton $\{v\}$.

\medskip

Now we remind  the necessary information on barycentric subdivisions of simplicial complexes. For a simplex $\sigma\in E$ of a complex $K=(V,E)$, the vector $$b_\sigma=\sum_{v\in \sigma}\tfrac{1}{|\sigma|}\delta_v\in\bar K\subset \ell^1(V)$$ is called the {\em barycenter} of $\overbar{\sigma}$.
Let $\sigma\in E$ and $\prec$ be a linear order on the set $\sigma$. By $\overbar{\sigma}_\prec$ we denote the set of all convex combinations $\sum_{v\in\sigma}t_v\delta_v\in\ell^1(V)$ such that  $t_u\ge t_v$ for every vertices $u\prec v$ of the symplex $\sigma$. Observe that  $\overbar{\sigma}_\prec$ is convex hull of the set $\{b_{\{v_1\}},b_{\{v_1,v_2\}},\dots,b_{\{v_1,v_2,\dots,v_k\}}\}$, where $\sigma=\{v_1,v_2,\dots,v_k\}$ and $v_1\prec v_2\prec\dots\prec v_k$.
The family 
$$\Sd(\overline{\sigma})= \{\overbar{\sigma}_\prec\colon \prec \text{ is total order on }\sigma\}$$ consists of $|\sigma|!$ sets and covers the entire set $\overbar \sigma$;  we call it the {\em barycentric subdivision} of $\overbar \sigma$. 
The {\em barycentric subdivision} $\Sd(\overbar K)$ of the complex $\overbar K$
is the family $\bigcup\{\Sd(\overbar\sigma)\colon \sigma\in E\}$.
Let us consider any $\overline{\sigma}_\prec\in\Sd(\overbar K)$.  Analogously as above, for each total order $\prec'$ on the set $\{b_{\{v_1\}},b_{\{v_1,v_2\}},\dots,b_{\{v_1,v_2,\dots,v_k\}}\}$, where $\sigma=\{v_1,v_2,\dots,v_k\}$ and $v_1\prec v_2\prec\dots\prec v_k$ we can consider the set $\overbar{\sigma}_{\prec,\prec'}\subset \ell_1(V)$ consisting of all convex combinations of the form
$$\sum_{i=1}^k t_i b_{\{v_1,\dots,v_i\}}  $$ 
such that if $b_{\{v_1,\dots,v_i\}}\prec' b_{\{v_1,\dots,v_j\}}$ then $t_i\ge t_j$.  The family of all $\overbar{\sigma}_{\prec,\prec'}$ forms the {\em second barycentric subdivision} $\Sd^2(\overbar{\sigma}_\prec)$ of $\overbar{\sigma}_\prec$. The union $$\Sd^2(\overbar{\sigma}):=\bigcup\{\Sd^2(\overbar{\sigma}_\prec)\colon \prec \text{ is a total order on }\sigma\}$$ is called the {\em second barycentric subdivision} of $\overbar{\sigma}$. The family $\Sd^2(\overbar{K})=\bigcup_{\sigma\in E}\Sd^2(\overbar{\sigma})$  is the {\em second barycentric subdivision} of  the complex $\overbar{K}$.

For a simplex $\sigma\in E$ and vertex $u\in\sigma$,  {\em the star} of $\delta_u$ with respect to $\Sd(\overbar\sigma)$ is the set
$$\St^1(\delta_u,\sigma)={\textstyle\bigcup}\{F\in\Sd(\overbar\sigma)\colon \delta_u\in F\}=\Big\{\sum_{v\in\sigma}t_v\delta_v\in\bar K\colon t_u=\max_{v\in\sigma} t_v \Big\}.$$
The union 
$$\St^1(\delta_u,K)={\textstyle\bigcup}\{\St(\delta_u, \sigma)\colon u\in\sigma\text{ and } \sigma\in E\}$$ is called {\em the star} of $\delta_u$ with respect to $\Sd(\overbar K)$.

For a simplex $\tau\in E$ of the complex $K$ by $\St^2(b_\tau,K)$ we denote the closed star of $b_\tau$ in the second barycentric subdivision of $K$, i.e.
$$\St^2(b_\tau,K)={\textstyle\bigcup}\{\overbar{\sigma}_{\prec,\prec'}\in\Sd^2(\overbar{K})\colon b_\tau\in \overbar{\sigma}_{\prec,\prec'}\}.$$ 

\begin{lemma}\label{l1}If $K=(V,E)$ is a simplicial complex and $\tau\in E$, then
\begin{multline*}\St^2(b_\tau,K)=\Big\{\sum_{i=1}^nt_{i} b_{\sigma_i}\colon 
n\ge |\tau|, \sum_{i=1}^nt_i=1, \;
\sigma_1\subset\sigma_2\subset\dots\subset \sigma_n\text{ are in }E,\\ (\forall i\;\; |\sigma_i|=i,\; t_i\ge 0), \;\sigma_{|\tau|}=\tau, \;\;
t_{|\tau|}=\max_{1\le i\le n} t_i\Big\}.
\end{multline*}
\end{lemma}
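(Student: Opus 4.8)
The plan is to recognize the second barycentric subdivision $\Sd^2(\overbar K)$ as the (first) barycentric subdivision of the auxiliary complex $L:=\Sd(K)$, whose vertex set is $\{b_\sigma:\sigma\in E\}$, and then to read off $\St^2(b_\tau,K)$ as the closed star of the \emph{vertex} $b_\tau$ of $L$ inside $\Sd(L)$. This reduces the entire computation to the first-level star formula already recorded in the setup, namely that within a single simplex the closed star of a vertex consists exactly of the points whose barycentric coordinate at that vertex is maximal.

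First I would observe that $b_\tau$ is genuinely a vertex of $L$, hence a vertex of $\Sd(L)=\Sd^2(\overbar K)$, so that the defining union of $\St^2(b_\tau,K)$ over the maximal cells $\overbar\sigma_{\prec,\prec'}$ of $\Sd^2(\overbar K)$ coincides with the first-level star $\St^1(b_\tau,L)$. Next I would describe the maximal simplices of $L$: each total order $\prec$ on a simplex $\sigma=\{v_1,\dots,v_n\}\in E$ with $v_1\prec\dots\prec v_n$ produces the cell $\overbar\sigma_\prec=\conv\{b_{\sigma_1},\dots,b_{\sigma_n}\}$, where $\sigma_i=\{v_1,\dots,v_i\}$; thus the maximal simplices of $L$ correspond exactly to the complete flags $\sigma_1\subset\dots\subset\sigma_n$ in $E$ with $|\sigma_i|=i$. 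Such a cell has $b_\tau$ among its vertices if and only if $\tau$ occurs in the flag, and since a complete flag contains exactly one simplex of each cardinality, this forces $\tau=\sigma_{|\tau|}$.

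I would then apply the first-level formula to $L$ and the vertex $b_\tau$: inside a maximal cell $S=\conv\{b_{\sigma_1},\dots,b_{\sigma_n}\}$ that contains $b_\tau$, writing a point as $x=\sum_{i=1}^n t_i b_{\sigma_i}$ with $t_i\ge 0$ and $\sum_i t_i=1$, the barycentric coordinate of $b_\tau=b_{\sigma_{|\tau|}}$ in $S$ is precisely $t_{|\tau|}$ (the $b_{\sigma_i}$ being affinely independent). Hence the part of $\St^1(b_\tau,L)$ lying in $S$ is $\{x:t_{|\tau|}=\max_{1\le i\le n}t_i\}$, and taking the union over all maximal cells $S$, equivalently over all complete flags with $\sigma_{|\tau|}=\tau$, reproduces exactly the set on the right-hand side of the claimed equality.

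The one point needing care, which I regard as the main (though mild) obstacle, is the passage from the a priori union over \emph{all} simplices of $L$ containing $b_\tau$ to the union over the \emph{maximal} ones featured in the displayed formula. This is handled by the standard remark that moving from a face to a coface only appends barycentric coordinates equal to $0$, so the condition ``the $b_\tau$-coordinate is maximal'' is preserved and each star cell of a partial flag already sits inside the star cell of any complete flag refining it; thus restricting to complete flags loses nothing. Matching the resulting coordinate description term by term with the right-hand side then completes the proof.
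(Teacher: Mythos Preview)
Your argument is correct and reaches the same conclusion, but it proceeds along a genuinely different route from the paper.

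The paper argues by direct coordinate computation inside $\ell^1(V)$. Given a cell $\overbar\sigma_{\prec,\prec'}$ containing $b_\tau$, it writes $b_\tau=\sum_i s_i\,b_{\{v_1,\dots,v_i\}}$, expands both sides in the basis $\{\delta_v\}$, and uses linear independence to force $\tau=\{v_1,\dots,v_k\}$, $s_k=1$, $s_i=0$ for $i\ne k$; this pins $b_\tau$ down as the $\prec'$-least vertex, after which the description of $\overbar\sigma_{\prec,\prec'}$ yields the displayed formula. The reverse inclusion is obtained by manufacturing suitable orders $\prec,\prec'$ from the data $(t_i,\sigma_i)$.

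You instead invoke the structural identification $\Sd^2(\overbar K)=\Sd(\overbar L)$ with $L=\Sd(K)$ and reduce everything to the first-level star formula $\St^1(\delta_u,\sigma)=\{\sum t_v\delta_v:t_u=\max_v t_v\}$ already recorded in the paper, now applied to the vertex $b_\tau$ of $L$. What your approach buys is brevity and a clearer explanation of \emph{why} the maximum condition on $t_{|\tau|}$ appears. What the paper's approach buys is self-containment: the one nontrivial hinge in your argument---that $b_\tau\in\overbar\sigma_{\prec,\prec'}$ forces $b_\tau$ to be an actual vertex of $\overbar\sigma_\prec$, i.e.\ $\tau=\sigma_{|\tau|}$---is precisely the computation the paper carries out explicitly, whereas you absorb it into the general principle that a vertex of a simplicial complex lying in a closed simplex must be one of its vertices (justified by the affine independence you mention). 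A minor terminological wrinkle: the cells $\overbar\sigma_\prec$ you call ``maximal simplices of $L$'' are really the simplices of $L$ coming from \emph{complete} flags, not the maximal ones; your final paragraph on partial versus complete flags correctly handles this, but the phrasing earlier is slightly misleading.
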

\begin{proof}
By the definition,
$$\St^2(b_\tau, K)=\bigcup\{\overbar{\sigma}_{\prec,\prec'}\in\Sd^2(\overbar{K})\colon b_\tau\in \overbar{\sigma}_{\prec,\prec'}\}.$$
Fix any $\bar\sigma_{\prec,\prec'}\in \St^2(b_\tau,K)$ and observe that $b_\tau\in	\overbar{\sigma}_{\prec,\prec'}$ where $\sigma=\{v_1,\dots,v_n\}\in E$, $v_1\prec\dots\prec v_n$ and $\prec'$ is linear order on the set $\{b_{\{v_1\}},\dots,b_{\{v_1,\dots,v_n\}}\}$. By the definition of $\overbar{\sigma}_{\prec,\prec'}$ we have
$$b_\tau=\sum_{i=1}^n s_i b_{\{v_1,\dots,v_i\}}$$
where $\sum_{i=1}^n s_i=1$, $s_i\ge 0$ and $s_i\ge s_j$ whenever $b_{\{v_1,\dots,v_i\}}\prec'b_{\{v_1,\dots,v_j\}}$. 
Hence
\begin{multline*}
b_\tau=\tfrac{1}{k}\delta_{u_1}+\dots+\tfrac{1}{k}\delta_{u_k}=\left(s_1+\tfrac{s_2}{2}+\dots+\tfrac{s_n}{n}\right)\delta_{v_1}+\left(\tfrac{s_2}{2}+\dots+\tfrac{s_n}{n}\right)\delta_{v_2}+\dots\\\dots+\left(\tfrac{s_k}{k}+\dots+\tfrac{s_n}{n}\right)\delta_{v_k}+\left(\tfrac{s_{k+1}}{k+1}+\dots+\tfrac{s_n}{n}\right)\delta_{v_{k+1}}+\dots+\tfrac{s_n}{n}\delta_{v_n},\tag{$*$}\end{multline*}
where $\tau=\{u_1,\dots,u_k\}$. By the linear independence of the set $\{\delta_u\colon u\in V\}$ in the space $\ell_1(V)$ we get $\tau\subseteq \sigma$. Moreover, the equality ($*$) implies that $\tau=\{v_1,\dots,v_k\}$, $k\le n$, $s_k=1$ and $s_i=0$ for for any $i\ne k$ in $\{1,\dots,n\}$. Therefore the barycenter $b_{\{v_1,\dots,v_k\}}$ is the least element of the set $\{b_{\{v_1\}},\dots,b_{\{v_1,\dots,v_n\}}\}$ with respect to order $\prec'$. So, any point $x\in\overbar{\sigma}_{\prec,\prec'}$ can be written as 
$$x=\sum_{i=1}^nt_{i} b_{\sigma_i}$$
where $n=|\sigma|\ge|\tau|$, $\sum_{i=1}^nt_i=1$, $t_i\ge 0$, $\sigma_i=\{v_1,\dots,v_i\}$ for each $i=1,\dots, n$, $\sigma_{|\tau|}=\sigma_k=\tau$ and $t_k=\max_{1\le i\le n} t_i$.

Now let $x=\sum_{i=1}^nt_{i} b_{\sigma_i}$ where 
$n\ge |\tau|$, $\sum_{i=1}^nt_i=1$, 
$\sigma_1\subset\sigma_2\subset\dots\subset \sigma_n$  are in $E$,  $(\forall i$\;\; $|\sigma_i|=i$, $t_i\ge 0$), $\sigma_{|\tau|}=\tau$,  and 
$t_{|\tau|}=\max_{1\le i\le n} t_i$. We can assume that $\sigma_i=\{v_1,\dots,v_i\}$ for each $i\in\{1,\dots,n\}$ and  order the set $\sigma=\sigma_n=\{v_1,\dots,v_n\}$ in the following way: $v_1\prec v_2\prec\dots\prec v_n$. On the set  
$\{b_{\{v_1\}},\dots,b_{\{v_1,\dots,v_n\}}\}$ we choose a linear order $\prec'$ such  that 
$b_{\sigma_i}\prec' b_{\sigma_j}$ if $t_i\ge t_j$.
Then $b_\tau\in \overbar{\sigma}_{\prec,\prec'}$ and obviously $x\in \overbar{\sigma}_{\prec,\prec'}$.
\end{proof}

\begin{lemma}\label{l2}
If $K=(V,E)$ is a simplicial complex and $\tau\in E$, then
$$
\begin{aligned}
\St^2(b_\tau,K)=\Big\{\sum_{k=1}^ns_{k} \delta_{v_k}\colon & |\{v_1,\dots,v_n\}|=n\ge |\tau|,\; s_1\ge  \dots\ge s_n\ge 0,\;
\sum_{k=1}^n k(s_k-s_{k+1})=1,\\
&s_{n+1}= 0,\;\;|\tau|(s_{|\tau|}-s_{|\tau|+1}) =
\max_{1\le k\le n} k(s_k-s_{k+1}),\\
& \{v_1,\dots,v_n\}\in E,\;\text{ and } \tau=\{v_1,\dots,v_{|\tau|}\} \Big\}.
\end{aligned}
$$
\end{lemma}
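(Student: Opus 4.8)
The plan is to derive Lemma~\ref{l2} directly from the description of $\St^2(b_\tau,K)$ obtained in Lemma~\ref{l1}, by performing the linear change of coordinates that passes from the coefficients $t_i$ of a point with respect to the barycenters $b_{\sigma_i}$ of a flag to the coefficients $s_k$ of that point with respect to the vertices $\delta_{v_k}$. Concretely, I would fix a point $x$ of $\St^2(b_\tau,K)$ written as in Lemma~\ref{l1}, so that $x=\sum_{i=1}^n t_i b_{\sigma_i}$ with $\sigma_1\subset\dots\subset\sigma_n$ in $E$, $|\sigma_i|=i$, $t_i\ge0$, $\sum_i t_i=1$, $\sigma_{|\tau|}=\tau$ and $t_{|\tau|}=\max_i t_i$. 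Writing $\sigma_i=\{v_1,\dots,v_i\}$ is legitimate because the flag is strictly increasing and $E$ is closed under subsets, so $\sigma_n=\{v_1,\dots,v_n\}\in E$ forces all $\sigma_i\in E$; under this labelling the condition $\sigma_{|\tau|}=\tau$ becomes exactly $\tau=\{v_1,\dots,v_{|\tau|}\}$.

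First I would compute the vertex coordinates of $x$. Substituting $b_{\sigma_i}=\tfrac{1}{i}\sum_{k=1}^i\delta_{v_k}$ and interchanging the two summations gives $x=\sum_{k=1}^n s_k\delta_{v_k}$ with $s_k=\sum_{i=k}^n \tfrac{t_i}{i}$. Since every $t_i\ge0$, the sequence $(s_k)$ is nonincreasing and nonnegative, and the telescoping identity $s_k-s_{k+1}=\tfrac{t_k}{k}$ (with the convention $s_{n+1}=0$) shows that this linear map is invertible, with inverse $t_k=k(s_k-s_{k+1})$. Thus the passage $(t_i)\leftrightarrow(s_k)$ is a bijection between nonnegative tuples $(t_i)$ and nonincreasing nonnegative tuples $(s_k)$, and for the reverse inclusion a nonincreasing tuple $(s_k)$ together with $\{v_1,\dots,v_n\}\in E$ recovers the flag $\sigma_i=\{v_1,\dots,v_i\}\in E$. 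It then remains only to rewrite the three remaining constraints.

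The translation of the constraints is then routine. The inequalities $t_i\ge0$ are equivalent to the monotonicity $s_1\ge\dots\ge s_n\ge0$. The normalization $\sum_i t_i=1$ becomes $\sum_{k=1}^n k(s_k-s_{k+1})=1$; by Abel summation this weighted sum equals $\sum_{k=1}^n s_k$, as it must, since $\sum_k s_k=1$ simply expresses that $x$ is a convex combination of vertices. Finally, the extremality condition $t_{|\tau|}=\max_{1\le i\le n}t_i$ turns, under $t_k=k(s_k-s_{k+1})$, into $|\tau|(s_{|\tau|}-s_{|\tau|+1})=\max_{1\le k\le n}k(s_k-s_{k+1})$. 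Assembling these equivalences with $\{v_1,\dots,v_n\}\in E$ and $\tau=\{v_1,\dots,v_{|\tau|}\}$ yields precisely the set displayed in Lemma~\ref{l2}.

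I do not expect a genuine obstacle here: the statement is a faithful coordinate rewriting of Lemma~\ref{l1}, and the only points requiring care are bookkeeping ones, namely checking that the change of variables $t_k=k(s_k-s_{k+1})$ is an honest bijection onto the nonincreasing nonnegative tuples and that the single $\max$-condition is preserved under this substitution. The mild Abel-summation identity $\sum_k k(s_k-s_{k+1})=\sum_k s_k$ is the only computation worth recording, and it doubles as a consistency check that the normalizations on both sides agree.
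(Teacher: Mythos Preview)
Your proposal is correct and follows essentially the same route as the paper's proof: both invoke Lemma~\ref{l1}, write the flag as $\sigma_i=\{v_1,\dots,v_i\}$, expand $b_{\sigma_i}$ in the vertex basis to obtain $s_k=\sum_{i=k}^n t_i/i$, and use the inverse relation $t_k=k(s_k-s_{k+1})$ to translate the normalization and the $\max$-condition in both directions. Your remark that the Abel identity $\sum_k k(s_k-s_{k+1})=\sum_k s_k$ serves as a consistency check is a nice addition but not a departure from the paper's argument.
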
	
	\begin{proof}
Let $x\in \St^2(b_\tau,K)$. By the Lemma~\ref{l1}, $x=\sum\limits_{i=1}^nt_i b_{_{\sigma_i}}$, where $\sum\limits_{i=1}^nt_i=1$,	$\sigma_1\subset\dots\subset \sigma_n$  are in $E$, $|\sigma_i|=i$ for each $i=1,\dots,n$, $\sigma_{|\tau|}=\tau$ and $t_{|\tau|}=\max_{1\le i\le n}t_i$. If we  assume that $\sigma_{i}=\{v_1,\dots,v_i\}$ for each $i=1,\dots,n$ then	
$$
b_{\sigma_i}=\tfrac{1}{|\sigma_i|}\sum_{k=1}^{|\sigma_i|}\delta_{v_k}.$$
Thus 
\begin{equation}
x=\sum_{i=1}^nt_i\tfrac{1}{|\sigma_i|}\sum_{k=1}^{|\sigma_i|}
\delta_{v_k}=\sum_{k=1}^n\delta_{v_k}\Big(\sum_{i=k}^n\tfrac{1}{i}t_i\Big)=\sum_{k=1}^ns_k\delta_{v_k},
\tag{$*$}\end{equation}
where
\begin{equation}
s_1=\tfrac{1}{1}t_1+\tfrac{1}{2}t_2+\dots+\tfrac{1}{n}t_n,\;\;
s_2=	\tfrac{1}{2}t_2+\dots+\tfrac{1}{n}t_n,\;\;\dots,\;\;s_n=\tfrac{1}{n}t_n.\tag{$**$}\end{equation}

Obviously $s_1\ge s_2\ge\dots\ge s_n$ and taking  $s_{n+1}=0$ we get 
$$\sum_{k=1}^nk(s_k-s_{k+1})=s_1+s_2+\dots+s_n+s_{n+1}=\sum_{i=1}^nt_i=1.$$
For each $k\in\{1,\dots,n\}$ we have 
$k(s_k-s_{k+1})=k\frac{1}{k}t_k=t_k$, and
consequently
$$|\tau|(s_{|\tau|}-s_{|\tau|+1})=t_{|\tau|}=\max_{1\le k\le n}t_k= \max_{1\le k\le n}k(s_k-s_{k+1}).$$

In order to obtain reverse inclusion, fix  
$x=\sum_{k=1}^ns_{k} \delta_{v_k}$, where   $n\ge |\tau|$, $s_1\ge  \dots\ge s_n\ge 0$,
$\sum_{k=1}^n k(s_k-s_{k+1})=1$, $s_{n+1}= 0$, 
$|\tau|(s_{|\tau|}-s_{|\tau|+1}) =
\max_{1\le k\le n} k(s_k-s_{k+1})$, $\sigma_k=\{v_1,\dots,v_k\}\in E$ for each $k=1,\dots,n$ and $\tau=\sigma_{|\tau|}$. Let 
$t_k=k(s_k-s_{k-1})$ for each $k=1,\dots,n$. Then $\sum_{k=1}^n t_k=1$, $t_{|\tau|}=\max_{1\le k\le n}t_k$ and the system of equations ($**$) is satisfied. Therefore, $x$ can be expressed as in equation ($*$). It means that 
$$x=\sum_{k=1}^{n}t_kb_{\sigma_k},$$
and $x\in\St^2(b_\tau,K)$ by Lemma~\ref{l1}.
		\end{proof}

\begin{lemma}\label{l3}
	If $\St^2(b_\sigma,K)\cap \St^2(b_\tau,K)\ne\emptyset$, then $\sigma$ and $\tau$ are 
	comparable.
\end{lemma}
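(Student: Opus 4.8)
The plan is to extract from a point $x$ in the intersection an intrinsic combinatorial invariant and to show that both $\sigma$ and $\tau$ are recovered from it, the invariant being a nested chain of sets. Fix $x\in\St^2(b_\sigma,K)\cap\St^2(b_\tau,K)$. Since $\{\delta_v:v\in V\}$ is linearly independent in $\ell^1(V)$, the point $x$ has uniquely determined coordinates, so we may write $x=\sum_{v\in V}x_v\delta_v$ with the coefficients $x_v\ge 0$ well-defined. For a real number $c>0$ consider the \emph{superlevel set} $L_c=\{v\in V:x_v\ge c\}$. As $c$ decreases these sets only grow, so the distinct nonempty superlevel sets of $x$ form a chain $L_{c_1}\subsetneq L_{c_2}\subsetneq\dots\subsetneq L_{c_r}$, where $c_1>c_2>\dots>c_r>0$ runs over the distinct positive values taken by the coefficients $x_v$. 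This chain depends only on $x$.

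Next I would show that $\tau$ is one of these superlevel sets. Applying Lemma~\ref{l2} to $\St^2(b_\tau,K)$, we obtain an enumeration $v_1,\dots,v_n$ of a simplex of $K$ with $s_k:=x_{v_k}$ satisfying $s_1\ge\dots\ge s_n\ge 0=s_{n+1}$, with $\tau=\{v_1,\dots,v_{|\tau|}\}$, and with the extremal condition $|\tau|(s_{|\tau|}-s_{|\tau|+1})=\max_{1\le k\le n}k(s_k-s_{k+1})$. Since $\sum_{k}k(s_k-s_{k+1})=1$ and every summand is nonnegative, the maximum is at least $1/n>0$; hence $s_{|\tau|}-s_{|\tau|+1}>0$, i.e. the sorted sequence strictly decreases at index $|\tau|$, and in particular $s_{|\tau|}>0$. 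Consequently the initial segment $\{v_1,\dots,v_{|\tau|}\}$ comprises exactly the vertices whose coefficient is at least $s_{|\tau|}$ --- it does not cut through a block of equal coordinates --- so $\tau=L_{s_{|\tau|}}$ is precisely the superlevel set at the threshold $s_{|\tau|}$, independently of how ties among equal coordinates were ordered. The same argument applied to $\St^2(b_\sigma,K)$ shows $\sigma=L_{s'}$ for the corresponding threshold $s'$.

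Finally, both $\sigma$ and $\tau$ belong to the chain of superlevel sets of $x$, and any two members of that chain are comparable; thus $\sigma\subseteq\tau$ or $\tau\subseteq\sigma$, as required. I expect the only delicate point to be the middle step: one must argue carefully that positivity of the maximum in Lemma~\ref{l2} forces the extremal index $|\tau|$ to be a genuine breakpoint of the nonincreasing coordinate sequence, and that this identifies $\{v_1,\dots,v_{|\tau|}\}$ with an order-independent superlevel set. Everything else is immediate from the uniqueness of coordinates and the nesting of the superlevel sets.
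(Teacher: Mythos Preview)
Your proof is correct. The core idea is the same as the paper's: the extremality condition in Lemma~\ref{l2} forces a strict drop $s_{|\tau|}>s_{|\tau|+1}$, so $\tau$ is determined intrinsically by the coordinates of $x$ (as the set of vertices with coefficient at least $s_{|\tau|}$), and likewise for $\sigma$. The paper carries this out more laboriously: it writes $x$ in two ways, introduces a permutation $f$ matching the two index sets, partitions indices into blocks $S_c,T_c$ by coordinate value, shows $S_c=T_c$, and then uses the strict drop at $|\sigma|$ to conclude $\{1,\dots,|\sigma|\}=S_{c_1}\cup\dots\cup S_{c_j}=T_{c_1}\cup\dots\cup T_{c_j}$, whence $\sigma\subset\tau$. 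Your superlevel-set language packages the same observation more transparently: once you note that both $\sigma$ and $\tau$ are superlevel sets of the single well-defined coordinate vector of $x$, comparability is immediate from the nesting of superlevel sets, and no permutation bookkeeping is needed.
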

\begin{proof}
Let $x\in\St^2(b_\sigma,K)\cap\St^2(b_\tau,K)$. We can assume that $|\sigma|\le |\tau|$. We will prove that $\sigma\subset \tau$. By Lemma~\ref{l2},
\begin{equation}
x=\sum_{k=1}^ns_{k} \delta_{v_k}=\sum_{k=1}^mt_{k} \delta_{u_k},\tag{$*$}
\end{equation}
where $\sigma=\{v_1,\dots,v_{|\sigma|}\}$, $|\{v_1,\dots,v_n\}|=n\ge |\sigma|$, $\tau
=\{u_1,\dots,u_{|\tau|}\}$, $|\{u_1,\dots,u_m\}|=m\ge |\tau|$,  
$$s_1\ge\dots\ge s_n\ge  0,\;
\sum_{k=1}^n k(s_k-s_{k+1})=1,\;s_{n+1}=0,\; |\sigma|(s_{|\sigma|}-s_{|\sigma|+1}) =
\max_{1\le k\le n} k(s_k-s_{k+1})$$ 
and  $$t_1\ge \dots\ge t_m\ge  0,\;
\sum_{k=1}^m k(t_k-t_{k+1})=1, t_{m+1}=0,\;  |\tau|(t_{|\tau|}-t_{|\tau|+1}) =
\max_{1\le k\le m} k(t_k-t_{k+1}).$$
We can assume that $s_k>0$ and $t_m>0$ for all $k\le n$ and $k\le m$ because $s_{|\sigma|},t_{|\tau|}>0$.

The linear independence of the set $\{\delta_v\colon v\in V\}$ and the equality ($*$) imply that $n=m$ and  
there exists a permutation $f\colon\{1,\dots,n\}\to\{1,\dots,m\}$ such that $t_{f(i)}=s_i$ and $u_{f(i)}=v_i$ for each $i\in\{1,\dots,n\}$. 
Denote the set $\{s_1,\dots,s_m\}=\{t_1,\dots,t_m\}$ by $C$ and write it as $C=\{c_1,\dots,c_l\}$ for some positive real numbers $c_1>\dots>c_l$. For every $c\in C$ consider the subsets $S_c=\{i\in\{1,\dots,n\}:s_i=c\}$ and $T_c=\{j\in\{1,\dots,m\}:t_j=c\}$. The equality $(*)$ implies that $f(S_c)=T_c$ for any $c\in C$. 
For two non-empty subsets $A,B\subset\mathbb N$ we write $A<B$ if $\max A<\min B$. 
Taking into account that $s_1\ge\dots\ge s_n$ and $t_1\ge\dots\ge t_m$, we conclude that $S_{c_1}<S_{c_2}<\dots<S_{c_l}$ and $T_{c_1}<T_{c_2}<\dots<T_{c_l}$, and hence $S_{c}=T_c$ for all $c\in C$.

Find $j\le l$ with $s_{|\sigma|}=c_j$. Taking into account that $s_{|\sigma|+1}<s_{|\sigma|}$, we conclude that $\{s_1,\dots,s_{|\sigma|}\}=\{c_1,\dots,c_j\}$ and $\{1,\dots,{|\sigma|}\}=S_1\cup\cdots \cup S_j=T_1\cup\cdots \cup T_j$. Then
\begin{multline*}
\sigma=\{v_1,\dots,v_{|\sigma|}\}=\bigcup_{i=1}^j\{v_k: k\in S_i\}=\bigcup_{i=1}^j\{u_{f(k)}: k\in S_i\}=\\
=\bigcup_{i=1}^j\{u_k:k\in T_i\}=\{u_1,\dots,u_{|\sigma|}\subset\{u_1,\dots,u_{|\tau|}\}=\tau.
\end{multline*}
\end{proof}


The last lemma  implies the following lemma.

\begin{lemma}\label{l4}
If $\mathcal{S}$ is a family of simplexes in a simplicial complex $K$  and $\{\St^2(b_\sigma,K)\colon \sigma\in\mathcal{S}\}$ is linked, then $\mathcal{S}$ is a chain.\qed 
\end{lemma}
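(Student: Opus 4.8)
The plan is to deduce the statement directly from Lemma~\ref{l3}, which already carries out all of the geometric work. Recall that a family is \emph{linked} precisely when any two of its members meet, and that a family $\mathcal{S}$ of simplexes is a \emph{chain} when any two of its members are comparable under inclusion. Thus both the hypothesis and the conclusion are assertions about arbitrary pairs, and it suffices to argue pairwise.

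First I would fix two arbitrary simplexes $\sigma,\tau\in\mathcal{S}$. Their closed stars $\St^2(b_\sigma,K)$ and $\St^2(b_\tau,K)$ are members of the family $\{\St^2(b_\rho,K)\colon\rho\in\mathcal{S}\}$, which is assumed to be linked; hence $\St^2(b_\sigma,K)\cap\St^2(b_\tau,K)\ne\emptyset$. Here one should note that each closed star is nonempty, since $b_\sigma\in\St^2(b_\sigma,K)$ (take $n=|\sigma|$, $\sigma_n=\sigma$, $t_{|\sigma|}=1$ in the description of Lemma~\ref{l1}); so the degenerate case $\sigma=\tau$ is harmless, and the case of two distinct simplexes inducing the same star set is equally covered. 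Now Lemma~\ref{l3} applies verbatim and yields that $\sigma$ and $\tau$ are comparable. Since $\sigma,\tau$ were arbitrary, every two elements of $\mathcal{S}$ are comparable under inclusion, which is exactly the assertion that $\mathcal{S}$ is a chain.

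I do not expect any genuine obstacle: the content of the lemma is entirely contained in Lemma~\ref{l3}, and the present statement is merely its reformulation from a single pair of simplexes to an arbitrary linked family. The only point deserving a word of care is the passage from \emph{linked} (a pairwise-intersection condition on the family of stars) to \emph{chain} (a pairwise-comparability condition on $\mathcal{S}$); this passage is immediate once one observes that both properties are quantified over all pairs, so the pointwise implication supplied by Lemma~\ref{l3} propagates to the whole family without any further argument.
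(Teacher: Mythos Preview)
Your proposal is correct and follows exactly the paper's approach: the paper simply states that Lemma~\ref{l3} implies Lemma~\ref{l4} (the proof is just the \texttt{\textbackslash qed}), and your argument spells out this immediate pairwise deduction.
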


It turns out that Lemma~\ref{l4} can be reversed in the following sense.

\begin{lemma}\label{l5}
	If $\sigma_1\subset\dots\subset \sigma_n$ is a chain of simplexes, then
	$$\sum_{i=1}^n\tfrac{1}{n}b_{\sigma_i}\in \bigcap_{i=1}^n\St^2(b_{\sigma_i},K).$$
\end{lemma}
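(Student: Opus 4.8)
The plan is to exploit the explicit description of the closed star furnished by Lemma~\ref{l1}, which writes the points of $\St^2(b_\tau,K)$ as convex combinations $\sum_{i=1}^N t_i b_{\rho_i}$ along a \emph{complete} flag $\rho_1\subset\dots\subset\rho_N$ with $|\rho_i|=i$, subject to the conditions $\rho_{|\tau|}=\tau$ and $t_{|\tau|}=\max_{1\le i\le N}t_i$. The given chain $\sigma_1\subset\dots\subset\sigma_n$ is in general not complete (its cardinalities may jump by more than one), so the first step I would take is to refine it to a complete flag that still passes through every $\sigma_p$.

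Concretely, I would list the vertices of the top simplex $\sigma_n$ in an order in which the vertices of $\sigma_1$ come first, then those of $\sigma_2\setminus\sigma_1$, and so on up to those of $\sigma_n\setminus\sigma_{n-1}$; writing $N=|\sigma_n|$ and letting $\rho_i$ denote the set of the first $i$ listed vertices, one obtains a complete flag $\rho_1\subset\dots\subset\rho_N$ with $|\rho_i|=i$ and $\rho_i\in E$ (as a nonempty subset of $\sigma_n\in E$). By construction $\rho_{|\sigma_p|}=\sigma_p$ for every $p\in\{1,\dots,n\}$.

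Next I would assign the weights $t_i=\tfrac1n$ whenever $i=|\sigma_p|$ for some $p\in\{1,\dots,n\}$, and $t_i=0$ otherwise. Since the cardinalities $|\sigma_1|<\dots<|\sigma_n|$ form $n$ distinct levels, exactly $n$ of the weights are nonzero, so $\sum_{i=1}^N t_i=1$ and
$$\sum_{i=1}^N t_i b_{\rho_i}=\sum_{p=1}^n\tfrac1n b_{\rho_{|\sigma_p|}}=\sum_{p=1}^n\tfrac1n b_{\sigma_p},$$
which is exactly the point $x$ in the statement. Fixing any $j\in\{1,\dots,n\}$ and taking $\tau=\sigma_j$, we have $\rho_{|\tau|}=\rho_{|\sigma_j|}=\sigma_j=\tau$ and $N\ge|\tau|$, so the conditions of Lemma~\ref{l1} reduce to the single requirement $t_{|\sigma_j|}=\max_{1\le i\le N}t_i$. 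Every nonzero $t_i$ equals $\tfrac1n$, hence the maximum is $\tfrac1n$, and it is attained at $i=|\sigma_j|$ because $|\sigma_j|$ is one of the chosen levels. Thus $x\in\St^2(b_{\sigma_j},K)$ by Lemma~\ref{l1}, and since $j$ was arbitrary, $x\in\bigcap_{j=1}^n\St^2(b_{\sigma_j},K)$.

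The only genuine point to watch — and the place where the specific coefficient $\tfrac1n$ is decisive — is the maximality condition, which must hold simultaneously for all $n$ stars. Giving every barycenter $b_{\sigma_p}$ the \emph{same} weight $\tfrac1n$ is precisely what makes the maximum be attained at each level $|\sigma_j|$ at once; any unequal choice of convex weights would place the point in some of the stars while violating the max condition for the others. Equivalently, one could run the same argument through the $\delta_v$-coordinate description of Lemma~\ref{l2}, where the quantities $k(s_k-s_{k+1})$ all equal $\tfrac1n$ at the block boundaries $k=|\sigma_p|$ and vanish in between.
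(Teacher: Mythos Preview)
Your proof is correct and follows essentially the same route as the paper's: refine the (implicitly strict) chain $\sigma_1\subset\cdots\subset\sigma_n$ to a complete flag $\rho_1\subset\cdots\subset\rho_N$ with $|\rho_i|=i$ and $\rho_{|\sigma_p|}=\sigma_p$, put weight $\tfrac1n$ at the levels $|\sigma_p|$ and $0$ elsewhere, and invoke Lemma~\ref{l1}. The paper does exactly this (calling the refined chain $\tau_1\subset\cdots\subset\tau_m$) but in a terser form; your version spells out the vertex ordering and the verification of the maximality condition more carefully, which is a welcome elaboration rather than a different argument.
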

\begin{proof}
The chain	$\sigma_1\subset\dots\subset \sigma_n$ can be enlarged to  a chain $\tau_1\subset\dots\subset \tau_m$ such that $|\tau_i|=i$ and $\tau_m=\sigma_n$. For each $i\in\{1,\dots,m\}$ let $t_i=\frac{1}{n}$ if $\tau_i=\sigma_j$ for some $j\in\{1,\dots,n\}$ and $t_i=0$ otherwise. By the Lemma~\ref{l1}
$$\sum_{j=1}^n\tfrac{1}{n}b_{\sigma_j}=\sum_{i=1}^mt_ib_{\tau_i}\in \St^2(b_{\sigma_k},K)$$
for each $k\in\{1,\dots,n\}$.
\end{proof}	
\begin{lemma}\label{l6}
	If $\sigma,\tau\in E$ and $\overbar{\sigma}\cap \St^2(b_\tau,K)\ne\emptyset$ then $\tau\subset\sigma$. In particular $b_\tau\in \overbar{\sigma}$. 
\end{lemma}
\begin{proof}
Let $x\in \overbar{\sigma}\cap  \St^2(b_\tau,K)$.	
By Lemma~\ref{l2}
$$x=\sum_{i=1}^k s_i\delta_{v_i}=\sum_{i=1}^nt_i\delta_{u_i}$$
where $k=|\sigma|$, $\sigma=\{v_1,\dots,v_k\}$, $\sum_{i=1}^ks_i=1$, $|\{u_1,\dots,u_n\}|=n\ge m=|\tau|$, $\tau=\{u_1,\dots,u_m\}$,  $t_1\ge\dots\ge t_n\ge 0$, $\sum_{i=1}^n i(t_i-t_{i+1})=1$, $t_{n+1}=0$, $m(t_m-t_{m+1})=\max_{1\le i\le n}i(t_i-t_{i+1})>0$. Suppose $u_i\notin \sigma$ for some $i\in\{1,\dots,m\}$. The linear independence of the set $\{\delta_v\colon v\in V\}$ implies that $t_i=0$. Then $t_j=0$ for $j\ge i$. In particular $t_m=0$ and hence $m(t_m-t_{m+1})=0$, which is a contradiction.	Thus $\tau\subset \sigma$ and hence $b_\tau\in\overbar{\sigma}$.
\end{proof}

\begin{proposition}\label{subdiv} Let $K$ be a finite-dimensional simplicial complex, $\F$ be a family of subcomplexes of $K$, $C$ be a subcomplex of $K$, and $\mathcal G$ be the family of closed stars of the second barycentric subdivision of $\overbar{C}$ centered at vertices of the first barycentric subdivision of $\overbar C$. Then
\begin{enumerate}
\item $\overbar C=\bigcup\mathcal G$;
\item the family $\mathcal G$ is $n$-discrete where $n=1+\dim(\overbar C)$;
\item each linked subfamily $\mathcal L\subset\F\cup\mathcal G$ with $\mathcal L\cap\mathcal G\ne\emptyset$ has $\bigcap\mathcal L\ne\emptyset$.
\end{enumerate}
\end{proposition}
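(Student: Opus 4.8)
The plan is to prove the three items essentially independently, relying on the combinatorial description of closed stars developed in Lemmas~\ref{l1}--\ref{l6}. Throughout, let $\mathcal G=\{\St^2(b_\tau,C)\colon \tau\in E_C\}$ where $C=(V_C,E_C)$ and recall that the vertices of the first barycentric subdivision of $\overbar C$ are precisely the barycenters $b_\tau$.

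For item (1), I would argue the covering $\overbar C=\bigcup\mathcal G$ directly. The inclusion $\bigcup\mathcal G\subset\overbar C$ is immediate since each star is a union of second-subdivision simplexes of $\overbar C$. For the reverse inclusion, given $x\in\overbar C$ there is a unique carrier $\sigma\in E_C$ with $x\in\sigma^\circ$; writing $x=\sum_{v\in\sigma}t_v\delta_v$ with all $t_v>0$, I would locate the vertex achieving the maximal coefficient and, more carefully, choose the flag $\sigma_1\subset\cdots\subset\sigma_n=\sigma$ adapted to the decreasing order of the coefficients, then invoke Lemma~\ref{l2} (equivalently Lemma~\ref{l1}) to exhibit $x$ inside $\St^2(b_\tau,C)$ for the appropriate $\tau$ where the maximum $|\tau|(s_{|\tau|}-s_{|\tau|+1})$ is attained. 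This is a routine verification against the explicit formula in Lemma~\ref{l2}.

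For item (2), I want to show $\mathcal G$ splits into $n=1+\dim(\overbar C)$ discrete subfamilies. The natural grading is by dimension of the indexing simplex: set $\mathcal G_j=\{\St^2(b_\tau,C)\colon |\tau|=j\}$ for $j=1,\dots,n$. By Lemma~\ref{l3}, if two stars $\St^2(b_\sigma,C)$ and $\St^2(b_\tau,C)$ meet then $\sigma,\tau$ are comparable; but two distinct simplexes of the \emph{same} cardinality can never be comparable, so the members of each $\mathcal G_j$ are pairwise disjoint. To upgrade pairwise disjointness to discreteness I would produce, for each point of $\overbar C$, a neighborhood meeting at most one member of $\mathcal G_j$; the cleanest route is to observe that each $\St^2(b_\tau,C)$ is closed and that a point $x$ with carrier $\rho$ can only lie in the closure of stars indexed by $\tau\subset\rho$ (by Lemma~\ref{l6}), of which there are finitely many, and at most one of fixed cardinality $j$ by disjointness, so a small enough neighborhood inside the open star of $\rho$ witnesses discreteness. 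Since $\dim(\overbar C)<\infty$, simplexes have cardinality between $1$ and $n$, so $\mathcal G=\bigcup_{j=1}^n\mathcal G_j$ is $n$-discrete.

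For item (3), let $\mathcal L\subset\F\cup\mathcal G$ be linked with $\mathcal L\cap\mathcal G\neq\emptyset$. Write $\mathcal S=\{\tau\colon \St^2(b_\tau,C)\in\mathcal L\}$ for the simplexes indexing the stars in $\mathcal L$; this set is nonempty. Since $\{\St^2(b_\tau,C)\colon\tau\in\mathcal S\}$ is linked, Lemma~\ref{l4} tells us $\mathcal S$ is a chain, say $\sigma_1\subset\cdots\subset\sigma_n$ after enumeration. Lemma~\ref{l5} then hands us a concrete common point $p=\sum_{i=1}^n\frac1n b_{\sigma_i}\in\bigcap_{\tau\in\mathcal S}\St^2(b_\tau,C)$. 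The remaining work is to show this same $p$ lies in every subcomplex $\overbar D\in\mathcal L\cap\F$. Here is where the hypotheses must combine: each such $\overbar D$ is linked with the star $\St^2(b_{\sigma_n},C)$ (the largest in the chain), and more is true, $\overbar D$ meets $\St^2(b_\tau,C)$ for every $\tau\in\mathcal S$. By Lemma~\ref{l6}, $\overbar D\cap\St^2(b_\tau,C)\neq\emptyset$ forces $\tau\subset D$ and $b_\tau\in\overbar D$ for every $\tau\in\mathcal S$; since $\overbar D$ is convex on each simplex and contains all the barycenters $b_{\sigma_1},\dots,b_{\sigma_n}$ which span a simplex of the first subdivision lying inside $\overbar D$, the convex combination $p$ belongs to $\overbar D$. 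Thus $p\in\bigcap\mathcal L$.

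I expect the main obstacle to be precisely the last step of item (3): turning ``$b_\tau\in\overbar D$ for each $\tau$ in the chain'' into ``the particular point $p$ lies in $\overbar D$.'' One must check that the barycenters of a chain $\sigma_1\subset\cdots\subset\sigma_n$ span a genuine simplex of $\Sd(\overbar C)$ and that membership of all its vertices in the subcomplex $\overbar D$ — which is itself a union of full geometric simplexes — forces the whole convex hull, hence $p$, into $\overbar D$. This rests on $\overbar D$ being a subcomplex (so it contains a face whenever it contains its spanning vertices) together with the linearity guaranteed by the $\ell^1$-realization, and it is the one place where the subcomplex structure of $\F$, rather than mere closedness, is genuinely used.
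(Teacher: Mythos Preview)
Your proposal is correct and, for items (1) and (3), tracks the paper's proof closely. Two differences are worth noting.

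For item (2) you and the paper both stratify $\mathcal G=\bigcup_{j=1}^n\mathcal G_j$ by $|\tau|=j$, but the discreteness arguments diverge. The paper gives a quantitative bound: using Lemma~\ref{l2} it shows that any two distinct members of $\mathcal G_j$ lie at $\ell^1$-distance at least $1/n^2$, so a metric ball of radius $1/(3n^2)$ witnesses discreteness. Your route is topological: Lemma~\ref{l3} gives pairwise disjointness within $\mathcal G_j$, and Lemma~\ref{l6} shows each closed simplex $\overbar\sigma$ meets only those stars with $\tau\subset\sigma$, hence finitely many, so the family is locally finite; a locally finite family of pairwise disjoint closed sets is discrete. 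Your version avoids the $\ell^1$ computation and is arguably cleaner, though the paper's bound is explicit; your phrase ``neighborhood inside the open star of $\rho$'' is a bit loose and should be replaced by the local-finiteness formulation just described.

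For item (3) the paper streamlines your last step. Rather than invoking linkage of $\overbar D$ with \emph{every} star to get $b_{\sigma_i}\in\overbar D$ for each $i$ and then arguing that their convex hull stays in $\overbar D$, the paper uses only linkage with the \emph{maximal} star $\St^2(b_{\sigma_k},C)$: this produces a single simplex $\tau$ of $D$ with $\sigma_k\subset\tau$, whence all $b_{\sigma_i}\in\overbar{\sigma_k}\subset\overbar\tau$ and convexity of $\overbar\tau$ gives $p\in\overbar\tau\subset\overbar D$ immediately. This sidesteps exactly the subtlety you flag in your last paragraph (the $b_{\sigma_i}$ are not vertices of $K$, so one cannot appeal directly to $D$ being a subcomplex of $K$), and is the tidier way to close the argument.
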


\begin{proof} 
Let $n=1+\dim(\overbar C)$. Observe that 
$\mathcal{G}=\bigcup\limits_{i=1}^{n}\mathcal{G}_i$, where
	$$\mathcal{G}_i=\{\St^2(b_\tau,C)\colon \tau \text{ is a simplex of  $C$ with } |\tau|=i \}$$
	for each $i\le n$. 
	 Obviously $\overbar{C}=\bigcup\mathcal{G}$.
	We claim that each family $\mathcal{G}_i$ is discrete.
	Indeed, let $\sigma,\tau$ be two distinct simplexes of $C$ with $|\sigma|=|\tau|$. 
		 If $x\in \St^2(b_\sigma,C)$ and $y\in \St^2(b_\tau,C)$ then by Lemma~\ref{l2} we have 
	$$x=\sum_{k=1}^ls_{k} \delta_{v_k}\text{ and } y=\sum_{k=1}^mt_{k} \delta_{u_k},$$
	where 
	\begin{enumerate}
		\item[(i)] $|\{v_1,\dots,v_l\}|=l\ge|\sigma|,\; |\{u_1,\dots,u_m\}|=m\ge|\tau|$,
		\item[(ii)] $s_1\ge\dots\ge s_l\ge s_{l+1}=0$, $t_1\ge\dots\ge t_m\ge t_{m+1}=0$,
		\item[(iii)] $\sum\limits_{k=1}^l k(s_k-s_{k+1})=1$, $\sum\limits_{k=1}^m k(t_k-t_{k+1})=1$,
		\item[(iv)] $|\sigma|(s_{|\sigma|}-s_{|\sigma|+1}) =
		\max\limits_{1\le k\le l} k(s_k-s_{k+1})$, \ $|\tau|(t_{|\tau|}-t_{|\tau|+1}) =
		\max\limits_{1\le k\le m} k(t_k-t_{k+1})$,
		\item[(v)] $\sigma=\{v_1,\dots,v_{|\sigma|}\}$, $\tau=\{u_1,\dots,u_{|\tau|}\}$.
		\item[(vi)] $\{v_1,\dots,v_l\},\{u_1,\dots,u_m\}\in E$ and hence $\max\{l,m\}\le1+\dim(K)=n$.
	\end{enumerate}
Since $\sigma\ne\tau$, there is $j$ such that $v_j\notin\tau$.  Then	
	 	$$||x-y||\ge s_j\ge s_{|\sigma|}\ge \tfrac{1}{|\sigma|}\max\limits_{1\le k\le l} k(s_k-s_{k+1})\ge\frac{1}{l\cdot |\sigma|}\ge \frac{1}{n^2}.$$
Then no ball of radius $\frac1{3n^2}$ in the Banach space $\ell^1(V)$ meets two distinct sets in the family $\mathcal G_i$, witnessing that this family if discrete.
	 Thus
	 $\mathcal{G}=\bigcup\limits_{i=0}^{n}\mathcal{G}_i$ is $n$-discrete. 
	 
We now turn to the proof of condition (3).	 
	Let $\mathcal L\subset\F\cup\mathcal G$, where $\mathcal{L}\cap \mathcal{G}\ne\emptyset$, be a linked family. Let $\mathcal{S}=\{\sigma\in C\colon \St^2(b_\sigma,C)\in \mathcal{L}\cap\mathcal{G}\}$.
By the Lemma~\ref{l4}, the family $\mathcal{S}$ is a chain. So, $\mathcal{S}=\{\sigma_1,\dots,\sigma_k\}$, where $\sigma_1\subset\dots\subset \sigma_k$ and  $\mathcal{L}\cap\mathcal{G}=\{\St^2(b_{\sigma_i},C)\colon 1\le i\le k\}$. By the Lemma~\ref{l5},  
$$\sum_{i=1}^k\tfrac{1}{k}b_{\sigma_i}\in\bigcap_{i=1}^k \St^2(b_{\sigma_i},C).$$

We claim that $\sum_{i=1}^k\frac1k b_{\sigma_i}\in\bar F$ for any subcomplex $\overbar{F}\in\mathcal{L}\cap\mathcal{F}$.
Since  
$\overbar{F}\cap \St^2(b_{\sigma_k},C) \ne\emptyset$, there exists a simplex $\tau$ of $F$ such that $\overline{\tau}\cap \St^2(b_{\sigma_k},C)\ne \emptyset$. Lemma~\ref{l6} implies that $\sigma_k\subset \tau$ and consequently $\sigma_i\subset \tau$ and hence $b_{\sigma_i}\in\bar\sigma_i\subset\bar\tau$ for each $i\in\{1,\dots,k\}$. Then  $\sum_{i=1}^k\frac{1}{k}b_{\sigma_i}\in\overbar{\tau}\subset\overbar{F}$ by the convexity of the geometric simplex $\bar\tau$, and finally $\sum_{i=1}^k\frac1kb_{\sigma_i}\in\bigcap\mathcal L$.
\end{proof}

\section{Simplicial realizations of $n$-discrete families}\label{s:real}

In this section we prove a technically difficult Proposition~\ref{real} on reflexions of $n$-discrete families in simplicial complexes. We start with the following lemma, which is a modification of the Sweeping Out Theorem 1.10.15 in \cite{End}.

\begin{lemma}\label{reduce} Let $L$ be a finite-dimensional simplicial complex, $K\subset L$ be a subcomplex in $L$. For any subset $A\subset \bar L$ there exists a simplicial subcomplex $M$ of $L$ containing $K$ and a continuous map $r:U\to \bar M$ defined on an open neighborhood $U\subset \bar L$ of $\overbar{M}\cup A$ in $\bar L$ such that $r\restriction\overbar{M}=\id$, $r(A\cup\bar K)=\bar M$, and $r(\bar\sigma\cap U)\subset\bar \sigma$ for every simplex $\sigma$ of $L$.
\end{lemma}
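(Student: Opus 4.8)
The plan is to prove the lemma by a downward induction on dimension, building $r$ as a finite composition of radial ``sweeping'' projections, one sweep per dimension, in the spirit of the Sweeping Out Theorem cited. The guiding dichotomy is that a top-dimensional simplex of the current complex should be retained in $M$ precisely when it is \emph{forced}, namely when it lies in $K$ or when its combinatorial interior is entirely covered by the current image of $A$; every other top simplex is redundant and can be radially pushed onto its combinatorial boundary. Since $\dim L<\infty$, only finitely many sweeps (dimensions $d=\dim L, \dim L-1,\dots,0$) are needed.

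First I would treat a \emph{single sweep} at a dimension $d$, acting on a current subcomplex $L_d$ (with $L_{\dim L}=L$) and a current image $A_d=(r_{\dim L}\circ\cdots\circ r_{d+1})(A)$ of $A$. For each $d$-simplex $\sigma$ of $L_d$ that is redundant, i.e.\ $\sigma\notin K$ and $\sigma^\circ\not\subset A_d$, I choose a center $c_\sigma\in\sigma^\circ\setminus A_d$ (which exists exactly by redundancy) and let $p_\sigma\colon\bar\sigma\setminus\{c_\sigma\}\to\partial\sigma$ be the radial projection from $c_\sigma$. Each $p_\sigma$ is the identity on $\partial\sigma$, maps $\bar\sigma$ into $\bar\sigma$, and carries $A_d\cap\bar\sigma$ into $\partial\sigma$ because $A_d$ avoids $c_\sigma$. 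Distinct processed simplexes meet only along common faces in their boundaries, where every $p_\sigma$ is the identity, so the $p_\sigma$ together with $\id$ on the retained and unprocessed simplexes glue to a continuous map $r_d$ defined on $U_d=\bar L_d\setminus\{c_\sigma:\sigma\text{ processed at dimension }d\}$; continuity in the weak topology is checked on each closed simplex, and $U_d$ is open because the removed centers lie in the interiors of pushed simplexes and hence form a set missing $\bar M\cup A_d$. Removing the open redundant $d$-simplexes yields the next complex $L_{d-1}$, and $A_{d-1}=r_d(A_d)$.

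Composing the finitely many sweeps gives $r=r_0\circ\cdots\circ r_{\dim L}$ on $U=\bigcap_d r_{>d}^{-1}(U_d)$, an open neighborhood of $\bar M\cup A$, where $M$ is the subcomplex of simplexes surviving all sweeps. Both $r\restriction\overbar M=\id$ and $K\subset M$ hold by construction, and $r(\bar\sigma\cap U)\subset\bar\sigma$ for \emph{every} simplex $\sigma$ of $L$: each elementary radial projection fixes the faces of its simplex, maps that simplex into itself, and leaves incomparable simplexes untouched, so the carrier-preservation property passes to the composition. The surjectivity $r(A\cup\bar K)=\bar M$ I would obtain from the inductive invariant that, once a simplex $\mu\notin K$ is finalized in $M$, its interior satisfies $\mu^\circ\subset A_{\text{current}}$; since $r$ is the identity on $\bar M$, this gives $\mu^\circ\subset r(A)$ for every $\mu\in M\setminus K$, whence $\bar M=\bar K\cup\bigcup_{\mu\in M}\mu^\circ\subset r(\bar K)\cup r(A)=r(A\cup\bar K)$, the reverse inclusion being immediate from $r(U)\subset\bar M$.

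The main obstacle will be twofold. The first and more delicate point is the surjectivity equality $r(A\cup\bar K)=\bar M$: it requires pinning down the retention rule so that \emph{every} retained non-$K$ simplex has its open simplex inside the swept image of $A$ at the stage it is finalized, and verifying that this invariant is genuinely preserved when a sweep lowers the dimension (a face retained as a boundary of a filled higher simplex must itself be shown to be covered or to be pushed out at its own stage). The second point, more technical than conceptual, is assembling the per-simplex radial projections into a single continuous map on an open set $U$ in the weak topology of the (possibly infinite) finite-dimensional complex, while simultaneously guaranteeing $r(\bar\sigma\cap U)\subset\bar\sigma$ for all simplexes at once, including those never directly processed.
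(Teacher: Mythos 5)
Your overall strategy (finitely many radial sweeps, centers chosen inside simplexes missed by the current image of $A$, composition of the sweeps on a shrinking open domain) is the same as the paper's, but your retention rule is wrong, and this is not a repairable technicality inside your scheme: it makes the conclusion false. You retain a simplex $\mu\notin K$ as soon as its \emph{interior} satisfies $\mu^\circ\subset A_{\mathrm{current}}$, whereas the paper retains a \emph{maximal} simplex $\sigma$ only when its whole \emph{closure} lies in $A_n\cup\bar K$ (equivalently, when $\bar\sigma$ misses the set $D$ of witness points, which are chosen in the interiors of \emph{all} faces that are not covered). The difference is fatal. Take $L$ to be a single closed $2$-simplex $\bar\mu$, let $K$ be one of its vertices, and $A=\mu^\circ$. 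Your dimension-$2$ sweep retains $\mu$. At the dimension-$1$ sweep every edge $\sigma$ is redundant in your sense ($\sigma\notin K$ and $\sigma^\circ\cap A_1=\emptyset$), so your procedure tries to push $\sigma^\circ$ out; but $\sigma$ is a face of the retained $\mu$, so the resulting $M$ is not a subcomplex, and your glued map is not even well defined on $\bar\sigma$: it must be the identity there because $\bar\sigma\subset\bar\mu\subset\bar M$, yet $p_\sigma$ moves $\sigma^\circ$. If instead you declare such faces retained --- the only option compatible with $r\restriction\overbar{M}=\id$ --- then $r(A\cup\bar K)=\mu^\circ\cup\bar K\subsetneq\bar\mu=\bar M$, and the surjectivity conclusion fails. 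You flagged exactly this (``a face retained as a boundary of a filled higher simplex\dots'') as your main obstacle, but under your retention rule it has no resolution.

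The paper's rule dissolves the obstacle by \emph{not} retaining $\mu$ in this example: since $\partial\mu\not\subset A\cup\bar K$, the simplex $\bar\mu$ meets $D$ and is swept, which pushes the two-dimensional mass of $A$ onto $\partial\mu$; the next stage sweeps the still-uncovered edge, and the iteration stops exactly when $A_n\cup\bar K=\bar L_n$, so that $\bar M=A_n\cup\bar K=r(A\cup\bar K)$ holds by construction rather than via a separate covering invariant. Two structural consequences of this rule are worth noting: the paper processes only \emph{maximal} simplexes of the current complex (of whatever dimension), never faces of retained ones, which is what makes the gluing legitimate; and its radial centers may lie on the boundary of the simplex being swept (a witness point in a face's interior), so the elementary projections are the identity only off $D$, not on the whole boundary as in your construction. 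If you replace ``interior covered'' by ``closure covered'' and ``redundant $d$-simplexes'' by ``maximal simplexes meeting $D$,'' your argument becomes the paper's proof.
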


\begin{proof} By induction we shall construct a decreasing sequence $(L_n)_{n\in\w}$ of subcomplexes of $L$, a sequence $(A_n)_{n\in\w}$ of subsets $A_n\subset \bar L_n$ and a sequence $(r_{n})_{n\in\w}$ of continuous maps $r_{n}:\dom(r_{n})\to \bar L_{n+1}$ defined on open subsets $\dom(r_{n})$ of $\bar L_n$ such that $\bar L_0=L$, $A_0=A$ and for every $n\in\w$ the following conditions are satisfied:
\begin{itemize}
\item[(a)] $A_{n+1}=r_{n}(A_n)$;
\item[(b)] $K\subset L_{n+1}$ and  $\bar L_{n+1}\cup A_n\subset \dom(r_{n})\subset \bar L_n$;
\item[(c)]  $r_{n}\restriction\bar L_{n+1}=\id$;
\item[(d)] if $\sigma$ is a maximal simplex of $L_n$ and $\bar\sigma \subset A_n\cup \bar K$, then $\bar\sigma\subset \dom(r_{n})$ and $r_{n}\restriction\bar\sigma=\id$;
\item[(e)] if $\sigma$ is a maximal simplex of $L_n$ and $\bar\sigma \not\subset A_n\cup \bar K$, then $r_{n}(\dom(r_{n}) \cap \bar\sigma)\subset\partial\sigma$;
\item[(f)] $\bar L_{n+1}=\bar L_n\setminus\bigcup\{\sigma^\circ:\sigma$ is a maximal simplex of $L_n$ with $\bar\sigma\not\subset A_n\cup \bar K\}$.
\end{itemize}
Assume that for some $n\in\w$ the subcomplex $L_n$ and the subset $A_n\subset \bar L_n$ have been constructed. Let $\mathcal S$ be the family of simplexes  of the complex $L_n$ such that $\sigma^\circ\not\subset \bar K\cup A_n$. For every simplex $\sigma\in\mathcal S$ fix a point $d_\sigma\in\sigma^\circ\setminus (A_n\cup \bar K)$. The definition of the topology on the complex $\bar L_n$ guarantees that the set $D=\{d_\sigma:\sigma\in\mathcal S\}$ is closed in $\bar L_n$. 
Then $\dom(r_{n})=\bar L_n\setminus D$ is an open subset of $\bar L_n$. Let $\mathcal M$ be the family of all maximal simplexes of the complex $L_n$ 
and observe that $\bar L_n=\bigcup_{\sigma\in\M}\bar\sigma$. Put $\mathcal M'=\{\sigma\in\M:\bar\sigma\cap D\ne\emptyset\}$ and $\bar L_{n+1}=\bar L_n\setminus\bigcup_{\sigma\in\M'}\sigma^\circ$. It follows that $\bar L_{n+1}$ coincides with the geometric realization of some subcomplex $L_{n+1}$ of the simplicial complex $L_n$. 

For every simplex $\sigma\in\M'$ fix any continuous map $r_\sigma:\bar\sigma\setminus D\to\partial\sigma$ such that $r_\sigma|\partial \sigma\setminus D=\id$.
Such a map $r_\sigma$ can be constructed as follows. Take any homeomorphism $h:\bar\sigma\to \bar B$ onto the closed unit ball $\bar B=\{x\in\IR^k:\|x\|\le1\}$  in the Euclidean space $\IR^k$ of dimension $k=\dim(\bar\sigma)=|\sigma|-1$. Under this homeomorphism the (combinatorial) boundary $\partial\sigma$ of $\bar\sigma$ maps onto the unit sphere $S=\{x\in\IR^k:\|x\|=1\}$.
  Next, choose any point $c\in h(D\cap \bar\sigma)$ and consider the retraction $r:\bar B\setminus\{c\}\to S\setminus\{c\}$ assigning to each point $x\in\bar B\setminus\{c\}$ the unique point of the intersection $S\cap (c+(x-c)\IR_+)$ of the sphere $S$ with the ray $c+(x-c)\IR_+$ starting at $c$ and passing through $x$. Then $r_\sigma=h^{-1}\circ r\circ h|\bar\sigma\setminus D$ is a required retraction of $\bar\sigma\setminus D$ onto $\partial\sigma$.

Finally define the map the $r_{n}:\bar L_n\setminus D\to \bar L_{n+1}$ letting $r_{n}|\bar\sigma=\id$ for every $\sigma\in\M\setminus\M'$ and $r_{n}|\bar\sigma\setminus D=r_\sigma$ for every $\sigma\in\M'$. Letting $A_{n+1}=r_{n}(A_n)\subset \bar L_{n+1}$ we complete the inductive step.
\smallskip

  After completing the inductive construction, we obtain a decreasing sequence of subcomplexes $(L_n)_{n\in\w}$ of $L$. Since $L$ is finite-dimensional, this sequence stabilizes, which means that $L_{n+1}=L_n$ and hence $A_n\cup \bar K=\bar L_n$ for some $n\in\IN$. Put $M= L_n=L_{n+1}$. By finite recursion define a sequence of maps $(\tilde r_k:\dom(\tilde r_k)\to \bar M)_{k=0}^n$ such that 
$\dom(\tilde r_n)=\bar L_n$, $\tilde r_n=\id$, and $\dom(\tilde r_{k})=r_{k}^{-1}(\dom(\tilde r_{k+1}))$ and $\tilde r_k=\tilde r_{k+1}\circ r_{k}{\restriction}\dom(\tilde r_k)$ for every $k\in\{n-1,n-2,\dots,0\}$. By induction it can be shown that for every $k\in\{n,\dots,0\}$ the domain $\dom(\tilde r_k)$ of the map $\tilde r_k$ is an open neighborhood of the set $A_k$ in $\bar L_k$ and $\tilde r_k(A_k)=A_{n+1}\subset \bar M$. Then the open neighborhood $U=\dom(\tilde r_0)$ of $A\cup\bar K$ in $\bar L=\bar L_0$ and the map $r=\tilde r_0\colon U\to \bar M$ have the required properties.
\end{proof}

\begin{proposition}\label{real} For any $n$-discrete family $\F$ of closed subsets of a collectively normal space $X$ there is a continuous surjective map $f:X\to \overbar{K}$ onto the geometric realization of some simplicial complex $K$ of dimension $\dim(K)\le n$ such that for every set $F\in\F$ the image $f(F)$ is a subcomplex of $\overbar K$ and for any subfamily $\mathcal E\subset \mathcal F$ we get $f(\bigcap\mathcal E)=\bigcap_{E\in\mathcal E}f(E)$.
\end{proposition}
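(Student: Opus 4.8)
The plan is to realize $\F$ by a canonical ``nerve--type'' map into a high--dimensional complex and then to shave the dimension down to $n$ using the Sweeping Out Lemma~\ref{reduce}, keeping the three required properties alive throughout. First I would unpack $n$--discreteness as $\F=\bigcup_{i=1}^{n}\F_i$ with each $\F_i$ discrete. Collectionwise normality lets me expand each $\F_i$ to a \emph{discrete} family of open sets $\{U^i_F:F\in\F_i\}$ with $F\subset U^i_F$, and normality then yields $\psi^i_F\in C(X,[0,1])$ with $\psi^i_F\restriction F\equiv 1$ and $\mathrm{supp}\,\psi^i_F\subset U^i_F$. The decisive structural point is that, because $\F_i$ is discrete, the sets $U^i_F$ ($F\in\F_i$) are pairwise disjoint, so at each $x\in X$ at most one $\psi^i_F$ is nonzero in level $i$; summing over the $n$ levels, at most $n$ of all the functions $\psi^i_F$ are nonzero at any given point. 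This local multiplicity bound is precisely what will later force $\dim(K)\le n$.

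Next I would assemble an initial continuous map. Adjoining an auxiliary apex vertex $*$, set $V=\F\cup\{*\}$, $\lambda_F=\tfrac1n\psi^i_F$ for $F\in\F_i$, and $\lambda_*=1-\sum_{F}\lambda_F\ge 0$; this is a locally finite partition of unity, and $g(x)=\sum_{F}\lambda_F(x)\delta_F+\lambda_*(x)\delta_*$ maps $X$ into the realization of the complex $L$ whose simplexes are the supports $\{v:\lambda_v(x)>0\}$ together with their faces. By the multiplicity bound already $\dim(L)\le n$, and the trivial inclusion $g(\bigcap\mathcal E)\subseteq\bigcap_{E}g(E)$ holds by functoriality of a map. (To gain better control of the intersection condition one may instead start from the join of the level--wise one--dimensional ``stars'', accepting $\dim(L)$ up to $2n-1$ and deferring the dimension cut to the sweeping step.)

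I would then invoke Lemma~\ref{reduce} with $A=g(X)$ to sweep the image onto the realization of a subcomplex $M$, obtaining $r\colon U\to\overbar M$ with $r\restriction\overbar M=\id$, $r(A\cup\overbar K)=\overbar M$, and $r(\overbar\sigma\cap U)\subseteq\overbar\sigma$ for every simplex $\sigma$ of $L$. Put $K=M$ and $f=r\circ g$. Surjectivity onto $\overbar K$ is immediate from $r(A)=\overbar M$; the property $r(\overbar\sigma\cap U)\subseteq\overbar\sigma$ shows that each $f(F)=r(g(F))$ is again a union of closed simplexes, i.e.\ a subcomplex, and it also transports the easy inclusion of the intersection identity from $g$ to $f$. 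If the raw dimension exceeds $n$, the sweeping (which removes open simplexes whose closures are not filled by $A\cup\overbar K$) is exactly what lowers it to the value dictated by the multiplicity bound.

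The hard part is the nontrivial inclusion $\bigcap_{E}f(E)\subseteq f(\bigcap\mathcal E)$. A clean sufficient condition is that the fibres of $f$ be \emph{membership--homogeneous}, i.e.\ $f^{-1}(f(F))=F$ for every $F$: then any preimage of a point $y\in\bigcap_{E}f(E)$ lies in every $E$, hence in $\bigcap\mathcal E$, and maps to $y$. The obstruction is that a single Urysohn function cannot distinguish $F$ from the ``fake'' region $\{\psi^i_F=1\}\setminus F$, since a closed set in a merely collectionwise normal space need not be a zero--set; a priori this lets a fibre mix $F$ with a disjoint $G$, which would violate the intersection identity. Overcoming this is where the careful application of Lemma~\ref{reduce} is essential: the reduction must be arranged (inductively over the levels, choosing the swept set $A$ in step with the images $f(F)$ and their intersections) so that these fake regions are collapsed onto lower faces and every point of $\bigcap_{E}f(E)$ acquires a genuine preimage inside $\bigcap\mathcal E$. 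I expect this fibre/intersection bookkeeping---preserving property~(3) intact through the dimension reduction---to be the main difficulty of the proof.
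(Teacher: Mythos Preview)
Your outline assembles the right ingredients---collectionwise normality, a nerve-type map, and Lemma~\ref{reduce}---but it stops exactly where the proof actually begins. You yourself flag the inclusion $\bigcap_{E\in\mathcal E}f(E)\subseteq f(\bigcap\mathcal E)$ as ``the main difficulty'' and then say only that Lemma~\ref{reduce} must be ``arranged inductively over the levels'' so that fake regions collapse. That is not a proof; it is a restatement of the problem. Two concrete issues: (i) your proposed sufficient condition $f^{-1}(f(F))=F$ is too strong and is \emph{not} what one can or should aim for (the paper never obtains it); (ii) a single global sweep via Lemma~\ref{reduce} applied to $A=g(X)$ does nothing to repair the intersection identity---the lemma only guarantees $r(\bar\sigma\cap U)\subset\bar\sigma$, which preserves the easy inclusion but has no leverage on the hard one. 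Inducting over the $n$ discrete layers $\F_1,\dots,\F_n$ does not help either, because the obstruction lives in the \emph{intersections} of sets from different layers, not in the layers themselves.

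The paper's construction is organized around a different induction. One forms the closure of $\F$ under finite intersections, $\bigwedge\F$, and stratifies it by a rank function: $\M_0$ is the set of minimal elements, $\M_\alpha$ the minimal elements of what remains after removing $\bigcup_{\beta<\alpha}\M_\beta$. Since $\F$ is $n$-discrete, $\rank(X)\le n+1$. The map $f$ is then built inductively over these strata. At rank $0$ the minimal elements are discrete and are sent to distinct vertices. At the step $\alpha\to\alpha+1$, for each $M\in\dot\M_{\alpha+1}$ one adjoins a fresh vertex, works in the cone $K_\alpha*\{M\}$, sends $M$ into this cone by a convex interpolation between $f_\alpha$ and $\delta_M$, and \emph{then} applies Lemma~\ref{reduce} to sweep $g_M(M)$ onto a subcomplex $L_M$. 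The intersection identity is carried through this step by exploiting two structural facts that your layer-by-layer scheme does not provide: first, for $M\in\dot\M_{\alpha+1}$ and $F\in\F$, either $M\subset F$ or $M\cap F\subset X_\alpha$ (Claim~\ref{cl:M}); second, the new pieces $\bar\Lambda_M\setminus\bar K_\alpha$ for distinct $M$ are pairwise disjoint. These two facts are precisely what let one compute $\bigcap_{E}f_\beta(E)$ as $f_\alpha(\bigcap_E E\cap X_\alpha)$ together with a disjoint union of new cells indexed by $\bigcap_E\dot\M_E=\dot\M_{\bigcap\mathcal E}$, which is $f_\beta(\bigcap\mathcal E)$. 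The induction variable, in short, must be the depth in the intersection lattice, not the index of the discrete sublayer.
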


\begin{proof} Write $\F$ as the union $\F=\bigcup_{i=1}^n\F_i$ of discrete families $\F_i$. 

Consider the family $\bigwedge \F=\{\bigcap\mathcal E:\mathcal E\subset\F\}\setminus\{\emptyset\}$ endowed with the partial order induced by the inclusion relation. The set $X$ belongs to the family $\bigwedge\F$ being the intersection of the empty subfamily $\emptyset\subset\F$. It follows that $X$ is the largest element of the partially ordered set $\bigwedge\F$.

Let $\M_0$ be the set of minimal elements of the partially ordered set $\bigwedge\F$ and for every ordinal $\alpha$ let $\M_\alpha$ be the set of minimal elements of the partially ordered set $\bigwedge\F\setminus\bigcup_{\beta<\alpha}\M_\beta$. 

\begin{claim}\label{cl:M-distinct} For any distinct sets $M,N\in\mathcal M_\alpha$ the intersection $M\cap N$ belongs to $\bigcup_{\beta<\alpha}\mathcal M_\beta$  as long as $M\cap N\ne\emptyset$. 
\end{claim}

\begin{proof} Indeed, assuming that $M\cap N\notin\bigcup_{\beta<\alpha}\mathcal M_\beta$ and taking into account that $M\cap N$ is a proper subset of $M$ or $N$, we would conclude that  $M$ or $N$  does not belong to $\mathcal M_\alpha$.
\end{proof}

Let $\rank(X)$ be the smallest ordinal $\alpha$ for which the set $\M_\alpha$ is empty. The $n$-discreteness of the family $\mathcal F$ implies that $\rank(X)\le n+1$.

Since the family $\bigwedge\F$ is locally finite, for every $\alpha\le\rank(X)$ the sets $X_{<\alpha}=\bigcup_{\beta<\alpha}\bigcup \M_\beta$ and
$X_{\alpha}=\bigcup_{\beta\le\alpha}\bigcup\M_\beta$ are closed in $X$.
Obviously $X_{<\rank(X)}=X$. 

 Define the function $\rank\colon X\to\rank(X)$ letting $\rank^{-1}(\alpha)=X_{\alpha}\setminus X_{<\alpha}$ for every $\alpha<\rank(X)$. Claim~\ref{cl:M-distinct} implies that for every ordinal $\alpha<\rank(X)$ and every $x\in X_{\alpha}\setminus X_{<\alpha}$ there exists a unique set $\mu_\alpha(x)\in\mathcal M_\alpha$ such that $x\in\mu_\alpha(x)$. So, $\mu_\alpha\colon X_{\alpha}\setminus X_{<\alpha}\to\mathcal M_\alpha$ is a well-defined function. Let $\dot\M_\alpha=\mu_\alpha(X_{\alpha}\setminus X_{<\alpha})$ and observe that $X_\alpha=X_{<\alpha}\cup\bigcup\dot\M_\alpha$.
 
\begin{claim}\label{cl:M} Let $\alpha$ be an ordinal, $M\in\dot\M_\alpha$ and $F\in\F$. If $M\cap F\setminus X_{<\alpha}\ne\emptyset$, then $M\subset F$.
\end{claim}

\begin{proof} Assuming that $M\not\subset F$ and taking into account that $M\cap F\setminus X_{<\alpha}\ne\emptyset$, we conclude that $M\cap F\in\bigwedge\F$ is a proper non-empty subset of $M$ and hence $M\cap F\in\M_{<\alpha}$ by the minimality of $M$. Then $M\cap F\subset X_{<\alpha}$ and hence $M\cap F\setminus X_{<\alpha}=\emptyset$, which contradicts our assumption.
\end{proof}
 
 
 

By (finite) induction, for every $\alpha<\rank(X)$ we shall construct an open neighborhood $U_\alpha$ of the set $X_\alpha$ in $X$ and a continuous map $f_\alpha:U_{\alpha}\to \overbar {K_\alpha}$ into the geometric realization of some simplicial complex $K_\alpha$ of dimension $\dim(K_\alpha)\le\alpha$ such that the following condition is satisfied:
\begin{itemize}
\item[$(*_\alpha)$] for every subfamily $\mathcal E\subset\F$ the image $f_\alpha(X_\alpha\cap\bigcap\mathcal E)$ is a subcomplex of $\overbar {K_\alpha}$ such that $f_\alpha(X_\alpha\cap\bigcap\mathcal E)=f_\alpha(U_\alpha\cap\bigcap\mathcal E)=\bigcap_{E\in\mathcal E}f_\alpha(U_\alpha\cap  E)$.
\end{itemize}

For $\alpha=0$ the family $\M_0$ is discrete. By the collective normality of $X$, each set $M\in\M_0$ has an open neighborhood $O_M\subset X$ such that the indexed family $(O_M)_{M\in\M_0}$  is discrete in $X$. For every $M\in\M_0$ consider the open neighborhood $U_M=O_M\setminus\bigcup\{E\in\bigwedge\F:M\cap E=\emptyset\}$. 
 Then $U_0=\bigcup_{M\in\M_0}U_M$ is an open neighborhood of the closed set $X_0=\bigcup\M_0$ in $X$. Let $K_0$ be the 0-dimensional simplicial complex whose set of vertices coincides with $\M_0=\dot\M_0$. Let $f_0:U_0\to \bar K_0$ be the map assigning to each point $X_0=\bigcup\M_0$ the characteristic function $\delta_{M}\in \bar K_0$ for the unique set $M\in\M_0$ whose neighborhood $U_M$ contains $x$. The map $f_0$ is continuous, since $\overbar{K_0}$ is a discrete space and $f_0$ is locally constant.

We claim that the map $f_0$ satisfies the condition $(*_0)$. It suffices to check that $\bigcap_{E\in\mathcal E}f_0(U_0\cap E)\subset f(X_0\cap\bigcap\mathcal E)$ for any subfamily $\mathcal E\subset\F$. Fix any $\delta_M\in\bigcap_{E\in\mathcal E}f_0(U_0\cap E)$ where $M$ is a vertex of the complex $K_0$. Then for every $E\in\mathcal E$ we have $\delta_M\in f_0(U_0\cap E)$ and hence $U_M\cap E\ne\emptyset$. The choice of the neighborhood $U_M$ guarantees that $M\cap E\ne\emptyset$ and then $M\subset E$ by the minimality of $M$. Then $M\subset\bigcap\mathcal E$ and hence $\{\delta_M\}=f_0(M)\subset f_0(X_0\cap\bigcap\mathcal E)$.
\smallskip

Now assume that for some (finite) ordinal $\alpha<\rank(X)$ we have constructed an open neighborhood $U_{\alpha}$ of $X_{\alpha}$ in $X$ and a  map $f_{\alpha}\colon U_{\alpha}\to \overbar{K_{\alpha}}$ into a similicial complex of dimension $\dim(\bar K_\alpha)\le\alpha$ satisfying the condition $(*_{\alpha})$. 

Put $\beta=\alpha+1$. Let us recall that $\dot\M_\beta=\{M\in\M_\beta:M\not\subset X_\alpha\}$. In each set $M\in\dot\M_\beta$ fix a point $z_M\in M\setminus X_\alpha$ and observe that the set $Z_\beta=\{z_M:M\in\dot\M_\beta\}$ is closed in $X$ (by the local finiteness of $\dot\M_\beta$) and $Z_\beta\subset X_\beta\setminus X_\alpha$. 
 
By the normality of $X$ there exists a continuous map $\lambda_\alpha:X\to [0,1]$ such that the preimage $\lambda_\alpha^{-1}(1)$ contains the set $Z_\beta\cup (X\setminus U_\alpha)$ in its interior,  and the preimage $\lambda_\alpha^{-1}(0)$ contains some open neighborhood $W_\alpha$ of $X_\alpha$ in $X$. Let's observe that $W_\alpha\subset U_\alpha$ and $M\setminus W_\alpha\ne\emptyset$ for every $M\in\dot{\M}_\beta$.

Let $L_\beta:=K_\alpha*\dot\M_\beta$ be the joint of the simplicial complex $K_\alpha$ and the $0$-dimensional simplicial complex whose set of vertices coincides with $\dot\M_\beta$. 
 It follows that $\dim(L_{\beta})\le\dim(K_{\alpha})+1\le \alpha+1=\beta$. 
 
By Claim~\ref{cl:M-distinct}, the family of closed sets $(M\setminus W_\alpha)_{M\in\dot\M_\beta}$ is disjoint and, being locally finite, is discrete. By the collective normality  of $X$, there exists a discrete family of open sets $(V_M)_{M\in\dot\M_\beta}$ in $X$ such that $$M\setminus W_\alpha\subset V_M\subset X\setminus\textstyle\bigcup\{F\in\bigwedge\F:F\cap M\setminus W_\alpha=\emptyset\}$$ for all $M\in\dot\M_\beta$.

By the property $(*_\alpha)$, for every $M\in\dot\M_\beta\subset\bigwedge\F$, the sets $f_\alpha(U_\alpha\cap M)$ and $f_\alpha(X_\alpha\cap M)$ coincide with the geometric realization $\overbar{K_M}$ of some subcomplex $K_M$ of $K_\alpha$.

For every $M\in\dot\M_\beta$ consider the map $g_M:W_\alpha\cup V_M\to \overline{K_\alpha*\{M\}}\subset\bar L_\beta$ defined by 
$$g_M(x)=\begin{cases}
f_\alpha(x)&\mbox{if $x\in W_\alpha$};\\
\delta_{M}&\mbox{if $x\in V_M\setminus U_\alpha$};\\
(1-\lambda_\alpha(x)){\cdot}f_\alpha(x)+\lambda_\alpha(x){\cdot}\delta_{M}&\mbox{if $x\in V_M\cap U_\alpha$}.
\end{cases}
$$ 
Using the properties of the function $\lambda_\alpha$, it can be shown that the function $g_M$ is well-defined and continuous.

By Lemma~\ref{reduce}, there exist a subcomplex $L_M$ of $K_\alpha*\{M\}$ with $\bar K_\alpha\cup\{\delta_{M}\}\subset \bar L_M$, an open set $O_M\subset\overline{K_\alpha*\{M\}}$ containing $g_M(M)\cup \bar L_M$, and a function $r_M:O_M\to \bar L_M$ such that 
\begin{itemize}
\item[(a)] $r_M{\restriction}\bar L_M=\id$, 
\item[(b)] $r_M(g_M(M)\cup\bar K_\alpha)=\bar L_M$,
\item[(c)] $r_M(\bar \sigma\cap O_M)\subset \bar\sigma$ for any simplex $\sigma$ of the complex $K_\alpha*\{M\}$.
\end{itemize}
Let $\Lambda_M$ be the subcomplex of $L_M$ whose geometric realization $\bar\Lambda_M$  coincides with the closure of the set $\bar L_M\setminus\bar K_\alpha$ in $\bar L_M$. 
Consider the open neighborhood $U_M=V_M\cap g_M^{-1}(O_M)$ of $M\setminus W_\alpha$ in $X$.

Let $U_\beta=W_\alpha\cup\bigcup_{M\in\dot\M_\beta}U_M$, $K_\beta=K_\alpha\cup\bigcup_{M\in\dot\M_\beta}\Lambda_M$, and $f_\beta:U_\beta\to\bar K_\beta$ be the map defined by $f_\beta{\restriction}W_\alpha=f_\alpha{\restriction}W_\alpha$ and $f_\beta{\restriction}U_M=r_M\circ g_M{\restriction}U_M$. It remains to check that the map $f_\beta:U_\beta\to\bar K_\beta$ has  property $(*_\beta)$. The proof will be divided into a series of claims.

\begin{claim}\label{cl3} $f_\beta{\restriction}M=r_M\circ g_M{\restriction}M$ for every $M\in\dot\M_\beta$.
\end{claim}

\begin{proof} Take any $x\in M$. If $x\notin W_\alpha$, then $f_\beta(x)=r_M\circ g_M(x)$ by the definition of $f_\beta$. If $x\in W_\alpha$, then $g_M(x)=f_\alpha(x)\in \bar K_\alpha\subset \bar L_M$ and  $r_M\circ g_M(x)=g_M(x)=f_\alpha(x)=f_\beta(x)$.
\end{proof}

\begin{claim}\label{cl4} For every $M\in\dot\M_\beta$,
$$\bar K_\alpha\cap \bar \Lambda_M\subset\bar K_M\mbox{ \ and   }f_\beta(M)=r_M(g_M(M))\subset \bar K_M\cup\bar\Lambda_M.$$
\end{claim}

\begin{proof} Since $f_\alpha(U_\alpha\cap M)=\bar K_M$, the definition of the map $g_M$ ensures that $g_M(M)\subset \overline{K_M*\{M\}}$. The property (c) of the retraction $r_M$ guarantees that $r_M(g_M(M))\subset \overline{K_M*\{M\}}$. By the property (b) of the map $r_M$,
$$\bar \Lambda_M\setminus \bar K_\alpha=\bar L_M\setminus \bar K_\alpha=r_M(g_M(M))\setminus\bar K_\alpha\subset \overline{K_M*\{M\}}.$$Then $\bar \Lambda_M=\overline{\bar\Lambda_M\setminus \bar K_\alpha}\subset \overline{K_M*\{M\}}$ and finally
$\bar \Lambda_M\cap\bar K_\alpha\subset \overline{K_M*\{M\}}\cap\bar K_\alpha=\bar K_M.$
By Claim~\ref{cl3},
$$f_\beta(M)=r_M(g_M(M))\subset \bar L_M\cap \overline{K_M*\{M\}}=(\bar K_\alpha\cup\bar\Lambda_M)\cap \overline{K_M*\{M\}}=\bar K_M\cup\bar\Lambda_M.$$
\end{proof}

 \begin{claim}\label{cl5} $f_\beta(M)=\bar K_M\cup \bar \Lambda_M=\bar K_M\cup (\bar \Lambda_M\setminus \bar K_\alpha)$ for every $M\in\dot\M_\beta$.
\end{claim}

\begin{proof}  By Claim~\ref{cl4},
$$\bar K_M=f_\alpha(M\cap X_\alpha)\subset f_\beta(M)\cap\bar K_\alpha\subset (\bar K_M\cup\bar\Lambda_M)\cap\bar K_\alpha=\bar K_M$$and hence $f_\beta(M)\cap \bar K_\alpha=\bar K_M$. Applying the property (b) of the retraction $r_M$ and the inclusion $\bar \Lambda_M\cap\bar K_\alpha\subset\bar K_M$, we obtain the desired equality
\begin{multline*}f_\beta(M)=r_M(g_M(M))=(f_\beta(M)\cap\bar K_\alpha)\cup(r_M(g_M(M))\setminus\bar K_\alpha)=\\=\bar K_M\cup (\bar L_M\setminus \bar K_\alpha)=\bar K_M\cup (\bar\Lambda_M\setminus\bar K_\alpha)=\bar K_M\cup\bar \Lambda_M.
\end{multline*}
\end{proof}

\begin{claim}\label{cl:eq} For every $F\in\bigwedge\F$ we have $$\{M\in\dot\M_\beta:M\subset F\}=\{M\in\dot\M_\beta:F\cap M\setminus X_\alpha\ne\emptyset\}=\{M\in\dot\M_\beta:U_M\cap F\ne\emptyset\}.$$
\end{claim}

\begin{proof} Fix any $M\in\dot\M_\beta$. Claim~\ref{cl:eq} can be derived from the  the following statements.
\smallskip

1. If $M\subset F$, then $\emptyset\ne M\setminus X_\alpha=F\cap M\setminus X_\alpha$.

2. If $F\cap M\setminus X_\alpha\ne\emptyset$, then $M\subset F$ by Claim~\ref{cl:M}. 

3. If $M\subset F$, then $\emptyset\ne M\setminus W_\alpha=F\cap M\setminus W_\alpha\subset F\cap U_M$.

4. If $U_M\cap F\ne\emptyset$, then $\emptyset\ne F\cap M\setminus W_\alpha\subset F\cap M\setminus X_{\alpha}$.
\end{proof}

\begin{claim}\label{cl7} For every $M\in\dot\M_\beta$  and $F\in\bigwedge \F$ we have
$$f_\beta(U_M\cap F)\subset \bar \Lambda_M\cup f_\alpha(X_\alpha\cap F).$$
\end{claim}
 
\begin{proof} The inclusion is trivially true of $U_M\cap F=\emptyset$. So, we assume that $U_M\cap F\ne\emptyset$ and hence $M\subset F$ by Claim~\ref{cl:eq}.

By the condition $(*_\alpha)$ the set $f_\alpha(X_\alpha\cap F)=f_\alpha(U_\alpha\cap F)$ is equal to the geometric realization $\bar K_F$ of some subcomplex $K_F$ of $K_\alpha$. The inclusion $M\subset F$ implies $K_M\subset K_F$.

The definition of the map $g_M$ ensures that $g_M(U_M\cap F)\subset \overline{K_F*\{M\}}$. The property (c) of the retraction $r_M$ guarantees that $r_M\circ g_M(U_M\cap F)\subset \overline{K_F*\{M\}}$.
Then $$f_\beta(U_M\cap F)=r_M\circ g_M(U_M\cap F)\subset \bar L_M\cap \overline{K_F*\{M\}}=\bar K_F\cup\bar\Lambda_M=f_\alpha(X_\alpha\cap F)\cup\bar\Lambda_M.$$
\end{proof}

Taking into account Claim~\ref{cl:eq}, for every $F\in\bigwedge\F$, consider the subfamily
$$\dot\M_F=\{M\in\dot\M_\beta:M\subset F\}=\{M\in\dot\M_\beta:F\cap M\setminus X_\alpha\ne\emptyset\}=\{M\in\dot\M_\beta:U_M\cap F\ne\emptyset\}.$$

\begin{claim}\label{cl:eq2} For every $F\in\bigwedge\F$, the sets $f_\beta(F\cap X_\beta)$ and $f_\beta(F\cap U_\beta)$ are equal to the subcomplex 
$$f_\alpha(F\cap X_\alpha)\cup\bigcup_{M\in\dot\M_F}\bar\Lambda_M=f_\alpha(F\cap X_\alpha)\cup\bigcup_{M\in\dot\M_F}(\bar \Lambda_M\setminus \bar K_\alpha)$$
of $\bar K_\beta$.
\end{claim}

\begin{proof} By the inductive assumption $(*_\alpha)$, the set $f_\alpha(U_\alpha \cap F)=f_\alpha(W_\alpha\cap F)=f_\alpha(X_\alpha\cap F)$ is equal to the geometric relaization $\bar K_F$ of some subcomplex $K_F$ of $K_\alpha$. For every $M\in\dot\M_F$ the inclusion $M\subset F$ implies 
$$\bar K_M=f_\alpha(X_\alpha\cap M)\subset f_\alpha(X_\alpha\cap F)=\bar K_F$$and consequently, $\bar \Lambda_M\cap \bar K_\alpha\subset \bar K_M\subset \bar K_F$.

The choice of the family $\dot\M_F$ ensures that 
$$F\cap X_\beta=(F\cap X_\alpha)\cup\bigcup_{M\in\dot \M_\beta}F\cap M=(F\cap X_\alpha)\cup\bigcup\dot\M_F vc$$
and
$$
F\cap U_\beta=(F\cap W_\alpha)\cup\bigcup_{M\in\dot \M_\beta}(F\cap U_M).
$$
 
By Claim~\ref{cl5}, 
\begin{multline*}
f_\beta(F\cap X_\beta)=f_\beta(F\cap X_\alpha)\cup \bigcup_{M\in\dot\M_F}f_\beta(M)=\bar K_F\cup\bigcup_{M\in\dot\M_F}(\bar K_M\cup\bar \Lambda_M)=\\=\bar K_F\cup\bigcup_{M\in\dot\M_F}\bar \Lambda_M=
\bar K_F\cup\bigcup_{M\in\dot\M_F}(\bar \Lambda_M\setminus \bar K_\alpha)
\end{multline*}
and by Claim~\ref{cl7},
$$
\begin{aligned}
f_\beta(F\cap X_\beta)&\subset f_\beta(F\cap U_\beta)=
f_\beta(F\cap W_\alpha)\cup\bigcup_{M\in\dot M_F}f_\beta(F\cap U_M)=\\
&=
f_\beta(F\cap W_\alpha)\cup\bigcup_{M\in\dot M_F}(f_\alpha(X_\alpha\cap F)\cup\bar\Lambda_M)=\\
&= f_\alpha(F\cap W_\alpha)\cup\bigcup_{M\in\dot\M_F}\bar\Lambda_M=\bar K_F\cup\bigcup_{M\in\dot\M_F}\bar \Lambda_M=f_\beta(F\cap X_\beta),
\end{aligned}
$$
which implies the desired equality
$$
f_\beta(F\cap X_\beta)=f_\beta(F\cap U_\beta)=f_\alpha(F\cap X_\alpha)\cup\bigcup_{M\in\dot\M_F}(\bar \Lambda_M\setminus\bar K_\alpha).
$$
\end{proof}

Finally, we are able to check the condition $(*_\beta)$. Take any subfamily $\mathcal E\subset\F$ and put $F=\bigcap\mathcal E$. Using Claim~\ref{cl:eq}, we can show that $\dot\M_F=\bigcap_{E\in\mathcal E}\dot\M_E$. 
Taking into account that the family $(\bar \Lambda_M\setminus \bar K_\alpha)_{M\in\dot\M_\beta}$ is disjoint, one can check that
$$\bigcap_{E\in\mathcal E}\Big(\bigcup_{M\in\dot\M_E}\bar \Lambda_M\setminus\bar K_\alpha\Big)=\bigcup_{M\in\dot M_F}\bar \Lambda_M\setminus\bar K_\alpha.$$
By Claim~\ref{cl:eq2} and the condition $(*_\alpha)$,
$$
\begin{aligned}
\bigcap_{E\in\mathcal E}f_\beta(U_\beta\cap E)&=\bigcap_{E\in\mathcal E}\big(f_\alpha(X_\alpha\cap E)\cup\bigcup_{M\in\dot\M_E}\bar \Lambda_M\setminus \bar K_\alpha\big)=\\
&=\Big(\bigcap_{E\in\mathcal E}f_\alpha(X_\alpha\cap E)\Big)\cup\Big(\bigcap_{E\in\mathcal E}\bigcup_{M\in\dot\M_E}\bar \Lambda_M\setminus \bar K_\alpha\Big)=\\
&=f_\alpha(X_\alpha\cap F)\cup\Big(\bigcup_{M\in\dot\M_F}\bar \Lambda_M\setminus \bar K_\alpha\Big)=f_\beta(X_\beta\cap F).
\end{aligned}
$$
which completes the proof of the condition $(*_\beta)$ and also completes the inductive step.
\smallskip

After completing the inductive construction, consider the finite ordinal $\alpha=\rank(X)-1\le n$ and observe that $X_\alpha=X$. Put $K=K_\alpha$ and observe that condition $(*_\alpha)$ implies that the map $f=f_\alpha\colon X\to \overbar K$ has the property required in Proposition~\ref{real}.
\end{proof}

\section{A Key Lemma}\label{s:key}

\begin{lemma}\label{key} Let $\F$ be an $n$-discrete family  of closed subsets of a collectively normal space $X$.
For every closed subset $C\subset X$ there exists an $(n+2)$-discrete family $\mathcal G$ of closed subsets of $X$ such that $C=\bigcup\mathcal G$ and for any linked family $\mathcal L\subset \mathcal F\cup\mathcal G$ with $\mathcal L\not\subset\F$ the intersection $\bigcap\mathcal L$ is not empty.
\end{lemma}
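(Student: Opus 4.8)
The plan is to combine the machinery of Proposition~\ref{real} with the geometric Proposition~\ref{subdiv}. First I would apply Proposition~\ref{real} to the $n$-discrete family $\F$ to obtain a continuous surjection $f\colon X\to\overbar K$ onto a simplicial complex $K$ of dimension $\dim(K)\le n$, with the crucial property that each $f(F)$ is a subcomplex and that $f$ commutes with intersections: $f(\bigcap\mathcal E)=\bigcap_{E\in\mathcal E}f(E)$ for every $\mathcal E\subset\F$. Since $C$ is closed in $X$, its image $f(C)$ is a (closed) subcomplex $\overbar C$ of $\overbar K$, of dimension at most $n$.

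Next I would feed this into Proposition~\ref{subdiv}, taking the subcomplex of $K$ to be $f(C)=\overbar C$ and the family of subcomplexes to be $\F' = \{\,f(F) : F\in\F\,\}$. Proposition~\ref{subdiv} produces the family $\mathcal G'$ of closed stars $\St^2(b_\tau,C)$ of the second barycentric subdivision centered at vertices of the first barycentric subdivision of $\overbar C$. By part~(1) of that proposition $\overbar C=\bigcup\mathcal G'$, by part~(2) the family $\mathcal G'$ is $m$-discrete where $m=1+\dim(\overbar C)\le n+1$, and by part~(3) every linked subfamily $\mathcal L'\subset\F'\cup\mathcal G'$ that meets $\mathcal G'$ has $\bigcap\mathcal L'\ne\emptyset$. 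I would then pull these stars back: set $\mathcal G=\{\,f^{-1}(G)\cap C : G\in\mathcal G'\,\}$. Each such set is closed in $X$; since $f\restriction C$ surjects onto $\overbar C=\bigcup\mathcal G'$, we get $C=\bigcup\mathcal G$. Discreteness pulls back through the continuous map $f$, so $\mathcal G$ is $m$-discrete, hence $(n+2)$-discrete as required (the bound $n+2$ comfortably covers $m\le n+1$).

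It remains to verify the linking condition, which is the main obstacle. Take a linked family $\mathcal L\subset\F\cup\mathcal G$ with $\mathcal L\not\subset\F$; write $\mathcal L=\mathcal L_\F\cup\mathcal L_\mathcal G$ with $\mathcal L_\mathcal G\ne\emptyset$. Applying $f$, I would form the image family $\mathcal L'\subset\F'\cup\mathcal G'$ consisting of $f(L)$ for $L\in\mathcal L_\F$ together with the corresponding stars $G$ for $f^{-1}(G)\cap C\in\mathcal L_\mathcal G$. The point is that $\mathcal L'$ is linked in $\overbar C$: for two members coming from $\mathcal L$, pick $x$ in the (nonempty) intersection of the two original sets; because every member of $\mathcal L_\mathcal G$ is contained in $C$ and every pairwise intersection with a $\mathcal G$-member forces $x\in C$, the point $f(x)$ lands in the intersection of the corresponding images, so the images meet. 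Here I would use the intersection-preserving property of $f$ together with the fact that the stars live inside $\overbar C$. Since $\mathcal L'$ meets $\mathcal G'$, Proposition~\ref{subdiv}(3) gives a point $y\in\bigcap\mathcal L'$. Finally I would lift $y$ back: choosing some $x\in C$ with $f(x)=y$ and invoking the equalities $f(U\cap E)=f(X\cap E)$-type identities from Proposition~\ref{real} (which guarantee $f^{-1}(f(L))\cap C\cap L$ behaves correctly), I would locate an actual point of $X$ lying in $\bigcap\mathcal L$. The delicate step is this last lifting: one must exploit that $f(\bigcap\mathcal E)=\bigcap f(E)$ to ensure that a preimage of $y$ can be chosen simultaneously inside all the $\F$-members and all the pulled-back stars of $\mathcal L$, rather than merely inside each of them separately.
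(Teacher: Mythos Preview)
There is a genuine gap in your approach: you apply Proposition~\ref{real} to the $n$-discrete family $\F$ alone, but you need the resulting map $f$ to interact well with $C$ too. Proposition~\ref{real} only guarantees that $f(F)$ is a subcomplex of $\overbar K$ for $F\in\F$, and that the intersection identity $f(\bigcap\mathcal E)=\bigcap_{E\in\mathcal E}f(E)$ holds for subfamilies $\mathcal E\subset\F$. Neither conclusion extends to an arbitrary closed set $C\notin\F$: the image $f(C)$ of a closed set under a continuous map need not even be closed, let alone a subcomplex, so Proposition~\ref{subdiv} cannot be applied to it. Moreover, your final lifting step requires
\[
y\in f(C)\cap\bigcap_{L\in\mathcal L\cap\F}f(L)=f\Big(C\cap\bigcap(\mathcal L\cap\F)\Big),
\]
and this equality demands that $C$ participate in the intersection-preserving property of $f$ --- which it does not, under your hypotheses. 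Choosing some $x\in C$ with $f(x)=y$ gives you a point in $C$ and in each pulled-back star, but there is no mechanism forcing $x$ into the sets of $\mathcal L\cap\F$.

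The paper's remedy is exactly what is missing: apply Proposition~\ref{real} to the enlarged family $\F\cup\{C\}$, which is $(n+1)$-discrete. Then $f(C)$ is automatically a subcomplex (of dimension at most $n+1$, whence $\mathcal G'$ is $(n+2)$-discrete by Proposition~\ref{subdiv}(2)), and the intersection identity holds for subfamilies of $\F\cup\{C\}$, making the lift of $y$ to a point of $C\cap\bigcap(\mathcal L\cap\F)$ immediate. Your overall architecture is correct; only this one input to Proposition~\ref{real} needs adjusting.
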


\begin{proof} Since the family $\F_C=\F\cup\{C\}$ is $(n+1)$-discrete, by Proposition~\ref{real}, there exists a surjective continuous map $f\colon X\to \overbar{K}$ onto the geometric realization of some simplicial complex $K$ of dimension $\dim(K)\le n+1$ such that for any subfamily $\mathcal E\subset\{C\}\cup\F$ the image $f(\bigcap\mathcal E)$ is a subcomplex of $\overbar K$ equal to the intersection $\bigcap_{E\in\mathcal E}f(E)$.

Let $\mathcal F'=\{f(F):F\in\F\}$. Let $\mathcal G'$ be the family of all closed stars of the second barycentric subdivision of the subcomplex $f(C)$, centered at the vertices of the first barycentric subdivision of $f(C)$. By Proposition~\ref{subdiv}, the family $\mathcal G'$ is an $(n+2)$-discrete closed cover of $f(C)$ such that any linked family $\mathcal L'\subset\F'\cup\mathcal G'$ with $\mathcal L'\cap\mathcal G'\ne\emptyset$ has $\bigcap\mathcal L'\ne\emptyset$.

Consider the family $\mathcal G=\{C\cap f^{-1}(G'):G'\in\mathcal G'\}$ and observe that $\mathcal G$ is an $(n+2)$-discrete closed cover of $C$.
Now take any linked system $\mathcal L\subset\F\cup\mathcal G$ with $\mathcal L\not\subset \F$ and consider the linked system $\mathcal L'=\{f(L):L\in\mathcal L\}\subset \F'\cup{\mathcal G'}$.
It follows that $\mathcal L'\cap\mathcal G'\ne\emptyset$ and hence $\bigcap\mathcal L'$ contains some point $$y\in \bigcap\mathcal L'\subset f(C)\cap\bigcap_{L\in\mathcal L\cap\F}f(L)=f\big(C\cap\textstyle{\bigcap}(\mathcal L\cap\F)\big).$$
Then we can find a point $x\in C\cap\bigcap(\mathcal L\cap\F)$ with $f(x)=y$ and conclude that $x\in\bigcap\mathcal L$.
\end{proof}

\section{Proof of Theorem~\ref{t:main}}\label{s:main}

 Given a collectionwise normal $\aleph$-space $X$, we should prove that $X$ has a binary $\sigma$-discrete closed $k$-network.

The space $X$, being an $\aleph$-space, admits a $\sigma$-discrete closed $k$-network $\mathcal N=\bigcup_{i\in\w}\mathcal N_i$. 

By induction, for every $n\in\w$ we shall construct a $2^{n+1}$-discrete family $\F_n$ of closed subsets of $X$ that has the following property
\begin{itemize}
\item[$(*_n)$]  $\F_n=\bigcup_{N\in\N_n}\F_N$ where  for every $N\in\mathcal N_n$ the subfamily $\mathcal F_N$ has the properties: $N=\bigcup\mathcal F_N$ and each linked family $\mathcal L\subset \mathcal F_N\cup \bigcup_{k<n}\mathcal F_k$ with $\mathcal L\not\subset \bigcup_{k<n}\mathcal F_k$ has $N\cap\bigcap\mathcal L\ne\emptyset$.
\end{itemize}

Put $\mathcal F_0=\N_0$ and observe that the family $\F_0$ is $2$-discrete and has property $(*_0)$. Indeed, for every $N\in\mathcal N_0$ consider the subfamily $\F_N=\{N\}\subset\F_0$ and observe that for any non-empty linked family $\mathcal L\subset\{N\}$ we have $N\cap \bigcap\mathcal L=N\ne\emptyset$.
\smallskip

Now assume that for some $n\in\w$ we have constructed $2^{i+1}$-discrete families $\mathcal F_i$, $i<n$. Then the family $\F=\bigcup_{i<n}\F_i$ is $m$-discrete for $m=\sum_{i<n}2^{i+1}=2^{n+1}-2$.  For every $N\in\mathcal N_{n}$ we can apply Lemma~\ref{key} and find a $(m+2)$-discrete closed cover $\mathcal F_N$ of the set $N$ such that each linked family $\mathcal L\subset \mathcal F_N\cup \bigcup_{k<n}\mathcal F_k$ with $\mathcal L\not\subset \bigcup_{k<n}\mathcal F_k$ has $N\cap\bigcap\mathcal L\ne\emptyset$. Then the $2^{n+1}$-discrete family $\F_n:=\bigcup_{N\in\mathcal N_n}\mathcal F_N$ has the property $(*_n)$.
\smallskip

We claim that $\mathcal F=\bigcup_{n\in\w}\F_n$ is a $\sigma$-discrete binary closed $k$-network for $X$. It is clear that $\mathcal F$ is $\sigma$-discrete and consists of closed sets. Let us show that $\mathcal F$ is binary. Given any finite linked subfamily $\mathcal L\subset\F$, find the smallest number $n\in\w$ such that $\mathcal L\subset\bigcup_{k\le n}\F_k$. Since the family $\mathcal N_n$ is disjoint, there exists a unique $N\in\mathcal N_n$ such that $\mathcal L\subset \F_N\cup\bigcup_{k<n}\mathcal F_k$. The property $(*_n)$ ensures that $\bigcap\mathcal L\ne\emptyset$.
\smallskip

 To see that $\F$ is a $k$-network for $X$, take any open set $U\subset X$ and a compact subset $K\subset U$. Since $\mathcal N$ is a $k$-network, there is a finite subfamily $\mathcal N'\subset\mathcal N$  such that $K\subset \bigcup\mathcal N'\subset U$. By the inductive construction, for every $N\in\mathcal N'$ there exist a number $m_N\in\IN$ and an $m_N$-discrete family $\mathcal D_N\subset\F$ such that $N=\bigcup\mathcal D_N$. By the compactness of $K\cap N$, the $m_N$-discrete subfamily $\K_N=\{D\in \mathcal D_N:D\cap K\ne\emptyset\}$ is finite. Then $\mathcal E=\bigcup_{N\in\mathcal N'}\K_N$ is a finite subfamily of $\F$ such that $K\subset \bigcup\mathcal E\subset U$. This witnesses that $\F$ is a $k$-network for $X$.

\section{Proof of Theorem~\ref{t:super}}\label{s:super}

In this section we prove Theorem~\ref{t:super}. We need to prove that the class of super spaces contains all supercompact spaces, all generalized ordered spaces, all metrizable spaces and all collectionwise normal $\aleph$-spaces. The superness of collectionwise $\aleph$-spaces has been proved in Theorem~\ref{t:main}. Since each metrizable space is a collectionwise normal $\aleph$-space, Theorem~\ref{t:main} implies that metrizable spaces are super. The superness of supercompact spaces is proved in Theorem 2.2 in \cite{BKT}.
\smallskip

It remains to prove the superness of generalized ordered spaces. Let $X$ be a GO-space and $\mathcal B$ be a base of the topology of $X$, consisting of order-convex sets with respect to some linear order $\le $ on $X$. Let us recall that a subset $C\subset X$ is {\em order-convex} if for any points $x,y\in C$ the set $C$ contains the order interval $[x,y]:=\{z\in X:x\le z\le y\}$.

Let $\mathcal K$ be the family of all closed order-convex sets in the GO-space $(X,\le)$. We claim that $\K$ is a binary closed $k$-network for $X$, witnessing that the space $X$ is super. To prove that $\K$ is binary, take any nonempty finite linked family $\mathcal L\subset\K$. For any sets $A,B\in\mathcal L$ choose a point $x_{AB}\in A\cap B$ such that $x_{AB}=x_{BA}$. For every $L\in \mathcal L$ let $a_L:=\min\{x_{LB}:B\in\mathcal L\}$ and $b_L:=\max\{x_{LB}:B\in\mathcal L\}$.
The order-convexity of the set $L$ guarantees that $[a_L,b_L]\subset L$. Let $a:=\max\{a_L:L\in\mathcal L\}$ and $b:=\min\{b_L:L\in\mathcal L\}$. Find sets $A,B\in\mathcal L$ such that $a=a_A$ and $b=b_B$. Then $a=a_A\le x_{AB}=x_{BA}\le b_B=b$ and hence $[a,b]\subset [a_L,b_L]\subset L$ for all $L\in\mathcal L$, which means that $[a,b]\subset \bigcap\mathcal L$ and the intersection $\bigcap\mathcal L$ is not empty. Therefore, the linked family $\mathcal L$ is centered and the family $\mathcal K$ is binary.

It remains to show that $\mathcal K$ is a $k$-network for $X$. Fix any open set $U\subset X$ and any compact set $K\subset U$. By definition, a generalized ordered space is Hausdorff.  This implies (see \cite{Ban}) that the linear order $\le $ is closed in $X\times X$ and order-invervals $[x,y]$ are closed subsets of $X$. By \cite[4.1]{Lutzer}, generalized ordered spaces are regular. This allows us for every $x\in K$ to choose an open order-convex set $O_x\in\mathcal B$ such that $x\in O_x\subset \overline{O_x}\subset U$. The closedness of the linear order on $X$ implies that the closure $\overline{O_x}$ of the order-convex set $O_x$ remains order-convex. By the compactness of $K$, the open cover $\{O_x:x\in K\}$ of $K$ has a finite subcover $\{O_x:x\in F\}$ (here $F$ is a suitable finite subset of $K$). Then the family $\F=\{\overline{O_x}:x\in F\}\subset\mathcal K$ has the property $K\subset\bigcup\F\subset U$, witnessing that $\mathcal K$ is a binary closed $k$-network and the GO-space $X$ is super.

\section{Proof of Theorem~\ref{t:product}}\label{s:product}

Given a discretely dense subset $X$ of the Tychonoff product $\prod_{\alpha\in A}X_\alpha$ of super spaces $X_\alpha$, we should prove that $X$ is super. For every $\alpha\in A$ fix a binary closed $k$-network $\mathcal K_\alpha$ in the super space $X_\alpha$. Replacing $\K_\alpha$ by $\K_\alpha\cup\{X_\alpha\}$, we can assume that $X_\alpha\in\K_\alpha$. For every finite subset $F\subset A$ let $\pr_F:X\to \prod_{\alpha\in F}X_\alpha$, $\pr_F:x\mapsto x{\restriction}_F$,  be the natural projection of $X$ onto the finite product $\prod_{\alpha\in F}X_\alpha$. The discrete density of $X$ in $\prod_{\alpha\in A}X_\alpha$ guarantees that $\pr_F(X)=\prod_{\alpha\in F}X_\alpha$.

Denote by $[A]^{<\w}$ the family of finite subsets of the index set $A$. We claim that the family $$\mathcal K=\bigcup_{F\in[A]^{<\w}}\big\{\pr^{-1}_F\big(\prod_{\alpha\in F}K_\alpha\big):(K_\alpha)_{\alpha\in F}\in\prod_{\alpha\in F}\mathcal K_\alpha\big\}$$is a binary $k$-network for the space $X$.

To see that $\K$ is a $k$-network, take any open set $U\subset X$ and a compact set $K\subset U$. By the definition of the Tychonoff product topology and the compactness of $K$, there exists a finite subset $F\subset A$ and an open set $V\subset \prod_{\alpha\in F}X_\alpha$ such that $K\subset \pr_F^{-1}(V)\subset U$.
It is easy to check that the family 
$$\mathcal K_F=\big\{\prod_{\alpha\in F}K_\alpha:(K_\alpha)_{\alpha\in F}\in\prod_{\alpha\in F}\mathcal K_\alpha\}$$
is a $k$-network for the product $\prod_{\alpha\in F}X_\alpha$. Consequently, there exists a finite subfamily $\F\subset \mathcal K_F$ such that $\pr_F(K)\subset\bigcup\F\subset V$. Then $\mathcal E:=\{\pr^{-1}_F(K):K\in\F\}$ is a finite subfamily in $\K$ such that $K\subset\bigcup\mathcal E\subset U$, witnessing that $\K$ is a closed $k$-network for $X$.
\smallskip

Next, we prove that the family $\K$ is binary. Fix any nonempty finite linked subfamily $\mathcal L\subset \K$. By the definition of the family $\K$, each set  $L\in\mathcal L\subset\K$ is equal to $\pr_{F_L}^{-1}(\prod_{\alpha\in F_L}K_{L,\alpha})$ for some finite set $F_L\subset A$ and an indexed family $(K_{L,\alpha})_{\alpha\in F_L}\in\prod_{\alpha\in F_L}\K_\alpha$. Since $X_\alpha\in \K_\alpha$ for all $\alpha\in A$, we can replace each $F_L$ by the finite set $F=\bigcup_{L\in\mathcal L}F_L$ and assume that $F=F_L$ for all $L\in\mathcal L$. The linkedness of the family $\mathcal L$ implies that for every $\alpha\in F$ the family $\{K_{L,\alpha}\}_{\alpha\in F}\subset \K_\alpha$ is linked and hence centered (by the binarity of $\K_\alpha$).
Then the intersection $\bigcap_{L\in\mathcal L}K_{L,\alpha}$ contains some point $x_\alpha$. Since $X$ is discretely dense in $\prod_{\alpha\in A}X_\alpha$, there exists $x\in X$ such that $x(\alpha)=x_\alpha$ for all $\alpha\in F$. It follows that
$$x\in\pr_F^{-1}(\prod_{\alpha\in F}(\bigcap_{L\in\mathcal L}K_{L,\alpha}))\subset \bigcap_{L\in\mathcal L}\pr_F^{-1}(\prod_{\alpha\in F}K_{L,\alpha})=\bigcap\mathcal L,$$ witnessing that the family $\mathcal L$ is centered. Therefore, the closed $k$-network $\K$ for $X$ is binary.

\end{document}